\newtheorem{thm}{Theorem}
\newtheorem{lemma}{Lemma}
\newtheorem{prop}{Proposition}
\def\C#1{{\mathcal {#1}}}
\title
{Convergence of Rothe scheme for hemivariational inequalities of parabolic type}
\author{Piotr Kalita,\\
  Faculty of Mathematics and Computer Science,\\
  Institute of Computer Science,\\
  Jagiellonian University,\\
  ul. prof. S. \L{}ojasiewicza 6, 30-348 Krak\'{o}w, Poland\\
  \texttt{piotr.kalita@uj.edu.pl}}
\begin{document}

%\keywords{hemivariational inequality, Rothe method, convergence, existence.}

\maketitle

\begin{abstract}This article presents the convergence analysis of a sequence of piecewise constant and
piecewise linear functions obtained by the Rothe method to the solution of the first order
evolution partial differential inclusion $u'(t)+Au(t)+\iota^*\partial J(\iota u(t))\ni f(t)$, where
the multivalued term is given by the Clarke subdifferential of a locally Lipschitz functional. The
method provides the proof of existence of solutions alternative to the ones known in literature and
together with any method for underlying elliptic problem, can serve as the effective tool to
approximate the solution numerically. Presented approach puts into the unified framework known
results for multivalued nonmonotone source term and boundary conditions, and generalizes them to
the case where the multivalued term is defined on the arbitrary reflexive Banach space as long as
appropriate conditions are satisfied. In addition the results on improved convergence as well as
the numerical examples are presented.
\end{abstract}

\section{Introduction}
Partial differential inclusions with the multivalued term given in the form of Clarke
subdifferential are known as hemivariational inequalities (HVIs). HVIs are the natural
generalization of the inclusions with monotone multivalued term (which lead to variational
inequalities) and were firstly considered by Panagiotopoulos in early 1980s. For the description of
the origins of HVIs and underlying mathematical theory we refer the reader to the book
\cite{Naniewicz1995}.

\noindent This paper deals with the first order evolution inclusion of type
$u'(t)+A(u(t))+\iota^*\partial J(\iota u(t)) \ni f(t)$. Such problems are known as parabolic HVIs
or boundary parabolic HVIs depending whether an operator $\iota$ is the embedding operator from
$H^1(\Omega)$ to $L^2(\Omega)$ or the trace operator from $H^1(\Omega)$ to
$H^{\frac{1}{2}}(\partial\Omega)$. The first case corresponds to multivalued and nonmonotone source
term in the equation and the second one to multivalued and nonmonotone boundary conditions of
Neumann-Robin type. Such inclusions  are used to model the diffusive transport through
semipermeable membranes where the multivalued term represents the semipermeability relation
\cite{Miettinen1999} and the temperature control problems where the multivalued term represents the
feedback control \cite{Haslinger1999}, \cite{Guanghui2010}.

\noindent The existence of solutions to problems governed by inclusions of considered type was
investigated by many authors. There are several techniques used to obtain the existence results:
\begin{itemize}
\item Classical Faedo-Galerkin approach combined with the regularization of the multivalued term by means of a standard mollifier; solutions of underlying system of ordinary differential equations are proved to converge (in appropriate sense) to the function which is shown to be the solution of analyzed HVI. This technique was used in context of parabolic HVIs by Miettinen \cite{Miettinen1996}, Miettinen and Panagiotopoulos \cite{Miettinen1999} and Goeleven et al. \cite{Goeleven2003}.
\item The approach based on the notion of upper and lower solutions. The solution is shown to be the limit of solutions of problems governed by the equations obtained by the regularization of the multivalued term together with the truncation by the lower and upper solutions. The distinctive feature of this approach is that the growth conditions on the multivalued term are replaced by the assumption of the existence of lower and upper solutions. The technique was used for parabolic HVIs by Carl \cite{Carl1996} and developed in \cite{Carl2002}, \cite{Carl2003}, \cite{Carl2004}, \cite{Carl2008}.
\item The technique based on showing that the analyzed HVI satisfies the assumptions of the general framework for which the appropriate surjectivity result holds. This approach was used by Liu \cite{Liu2000} and by Mig\'{o}rski \cite{Migorski2001} and developed for the boundary case in \cite{Migorski2004}.
\item The technique based on adding to the inclusion the regularizing term multiplied by $\epsilon>0$, showing that the solutions to obtained problems satisfy some bounds uniformly in $\epsilon$ and passing to the limit $\epsilon\to 0$. This technique was used for parabolic HVIs by Liu and Zhang \cite{Liu1998} and Liu \cite{Liu1999} and developed in \cite{Liu2003}, \cite{Liu2005}.
\end{itemize}
\noindent It should be remarked that above techniques are either nonconstructive (i.e. they are
based on surjectivity result) or constructive but not effective (i.e. require a priori knowledge of
lower and upper solutions, or require additional or smoothing terms in the problem).

\noindent In contrast to the existence theory, numerical methods to approximate effectively the
solutions to parabolic HVIs were not considered by many authors. In the book of Haslinger,
Miettinen and Panagiotopoulos \cite{Haslinger1999} the convergence of solutions obtained by the
finite element approximation of the space variable and finite difference approximation of the time
variable is proved. However only the case of the linear operator $A$ and the multivalued source
term (and not boundary conditions) is considered (see Remark 4.10 in \cite{Haslinger1999}).
\noindent In \cite{Guanghui2010} the authors proved the convergence of the finite difference scheme
(with respect to both time and space variable) for the case of multivalued source term (i.e. $U=H$
in the sequel).

\noindent Our approach uses the so-called Rothe method (known also as time approximation method)
and allows to extend any numerical method that is used to solve the stationary, elliptic inclusions
with the multivalued term given as the Clarke subdifferential, to time dependent, parabolic
problems. The key idea is the replacement of time derivative with the backward difference scheme
and solve the associated elliptic problem in every time step to find the solution in the
consecutive points of the time mesh. It is proved that the results obtained by such approach
approximate the solution of the original problem.

\noindent On the other hand, the Rothe metod provides the proof of existence of solutions. In
contrast to other approaches this metod, as long as one can solve underlying elliptic problems,
does not require any smoothing or other additional regularizing terms in the inclusion. Furthermore
the presented approach allows to study the inclusions with multivalued term given on the domain and
on the domain boundary within the unified framework in which the multifunction that appears in the
problem is defined on an arbitrary reflexive Banach space, which satisfies the appropriate
assumption ($H(U)$ in the sequel). This assumption is proved to generalize the case of inclusions
with multivalued boundary conditions and the ones with multivalued source term (see Section
\ref{sec:lions} and examples of problem settings in Section \ref{sec:settings}).

\noindent The Rothe method for parabolic nonlinear PDEs with pseudomonotone operators is described
in the monograph of Roubicek \cite{Roubicek2005}, where also the results for the monotone
multivalued problems are presented. In the context of parabolic HVIs the variant of the Rothe
method was used to show existence of solutions to problems with hysteresis in \cite{Miettinen1998}
and \cite{Miettinen2003}, but there only the case of linear operator $A$ and $f\in
L^2(0,T;L^2(\Omega))$ (which excludes nonhomogeneous Neumann conditions) was considered and besides
only the case of the multivalued and nonmonotone term source term was analyzed.

\noindent In Section \ref{sec:prel} some basic definitions are recalled. Section \ref{sec:lions}
presents the generalization of the Lions-Aubin Compactness Lemma that justifies the usage of the
assumption $H(U)$ in the sequel. Problem setup and the assumptions are presented in Section
\ref{sec:prblm}. The auxiliary elliptic problems solved in every time step, which are the key idea
of the Rothe method, are formulated and analyzed in Section \ref{sec:rothe}. Convergence of
piecewise linear and piecewise constant functions constructed basing on the solutions of auxiliary
problems as well as the fact that the limit solves the original problem is proved in Section
\ref{sec:limit}. Some stronger convergence and uniqueness results are established in Section
\ref{sec:uniq}. Finally in Section \ref{sec:settings} it is shown that the cases of multivalued
boundary condition and source term are the special cases of presented general framework and a
simple numerical example is delivered.

\section{Preliminaries} \label{sec:prel}
\noindent In this section we recall several key definitions that will be used in the sequel.

\noindent For a locally Lipschitz functional $j:X\to \mathbb{R}$, where $X$ is a Banach space,
generalized directional derivative (in the sense of Clarke) at $x\in X$ in the direction $z\in X$
is defined as
$$j^0(x;z)=\limsup_{y\to x, \lambda \to 0^+}\frac{j(y+\lambda z)-j(y)}{\lambda}.$$ \noindent
Generalized gradient of $j$ (in the sense of Clarke) is the multifunction $\partial j:X\to 2^{X^*}$
defined by
$$
\partial j(x)=\{\xi\in X^*: j^0(x;y)\geq\langle \xi,y\rangle \ \ \mbox{for all}\ \ y\in X\},
$$
\noindent where $\langle\cdot,\cdot\rangle$ stands for the duality pairing between $X$ and $X^*$.
For the properties and the calculus of the Clarke gradient see \cite{Clarke1990}.

\noindent Recall that the multifunction $A:X\to 2^{X^*}$, where $X$ is a real and reflexive Banach
space is pseudomonotone if
\begin{itemize}
\item[(i)] $A$ has values which are nonempty, weakly compact and convex,
\item[(ii)] $A$ is usc from every finite dimensional subsepace of $X$ into $X^*$ furnished with weak topology,
\item[(iii)] if $v_n\to v$ weakly in $X$ and $v_n^*\in A(v_n)$ is such that $\limsup_{n\to\infty}\langle v_n^*,v_n-v\rangle \leq 0$ then for every $y\in X$ there exists $u(y)\in A(v)$ such that $\langle u(y),v-y\rangle\leq \liminf_{n\to\infty}\langle v_n^*,v_n-y\rangle$.
\end{itemize}
\noindent Note that sometimes it is useful to check the pseudomonotonicity of an operator via the
following sufficient condition (see Proposition 1.3.66 in \cite{Denkowski2003} or Proposition 3.1
in \cite{Carl2006}).
\begin{prop} \label{prop:pseudo} Let X be a real reflexive Banach space, and assume that $A: X\to 2^{X^*}$ satisfies the following conditions
\begin{itemize}
\item[$(i)$] for each $v\in X$ we have that $A(v)$ is a nonempty, closed and convex subset of $X^*$.
\item[$(ii)$] $A$ is bounded.
\item[$(iii)$] If $v_n\to v$ weakly in $X$ and $v_n^*\to v^*$ weakly in $X^*$  with $v_n^*\in A(v_n)$ and if $\limsup_{n\to\infty}\langle v_n^*, v_n - v\rangle \leq 0$, then $v^*\in A(v)$ and $\langle v_n^*,v_n\rangle \to \langle v^*,v\rangle$.
\end{itemize}
Then the operator $A$ is pseudomonotone.
\end{prop}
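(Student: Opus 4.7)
The plan is to verify the three clauses of pseudomonotonicity in turn, extracting weakly convergent subsequences from bounded sets and invoking hypothesis (iii) to identify their limits.

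First I would dispose of clause (i) of the pseudomonotonicity definition. Nonemptyness and convexity are given by hypothesis (i). For weak compactness, observe that hypothesis (ii) guarantees that each $A(v)$ is bounded in $X^*$; since $A(v)$ is also norm-closed and convex, it is weakly closed by Mazur's theorem, and a bounded weakly closed subset of the dual of a reflexive Banach space is weakly compact. That settles clause (i).

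Next I would handle clause (ii) by contradiction. Fix a finite-dimensional subspace $F\subset X$, a point $v\in F$, and a weakly open $V\subset X^*$ with $A(v)\subset V$. If $A$ fails to be upper semicontinuous at $v$ along $F$, one can choose $v_n\to v$ in $F$ (strong and weak convergence coincide here) and $v_n^*\in A(v_n)\setminus V$. By hypothesis (ii) the sequence $v_n^*$ is bounded, so by reflexivity a subsequence converges weakly in $X^*$ to some $v^*$. Boundedness of $v_n^*$ and strong convergence $v_n\to v$ give $\langle v_n^*,v_n-v\rangle\to 0$, hence in particular $\limsup\langle v_n^*,v_n-v\rangle\le 0$. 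Hypothesis (iii) then forces $v^*\in A(v)\subset V$, contradicting that $V$ is weakly open and $v_n^*\notin V$.

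The main work is in clause (iii), and this is where I expect the subsequence bookkeeping to be the only real obstacle. Suppose $v_n\to v$ weakly in $X$, $v_n^*\in A(v_n)$, and $\limsup_n\langle v_n^*,v_n-v\rangle\le 0$. Fix $y\in X$. Pass first to a subsequence (still indexed by $n$) realising the liminf, i.e.\
\[
\lim_n \langle v_n^*,v_n-y\rangle = \liminf_n \langle v_n^*,v_n-y\rangle.
\]
Since $A$ is bounded and $X^*$ is reflexive, extract a further subsequence with $v_n^*\rightharpoonup u(y)$ in $X^*$. The $\limsup$ hypothesis is inherited along the subsequence, so hypothesis (iii) yields both $u(y)\in A(v)$ and $\langle v_n^*,v_n\rangle \to \langle u(y),v\rangle$. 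Combined with $\langle v_n^*,y\rangle\to\langle u(y),y\rangle$, this gives $\langle v_n^*,v_n-y\rangle\to\langle u(y),v-y\rangle$ along the chosen subsequence, and by the initial choice that limit equals $\liminf_n\langle v_n^*,v_n-y\rangle$. Thus $\langle u(y),v-y\rangle=\liminf_n\langle v_n^*,v_n-y\rangle$, which in particular satisfies the required $\le$. The subtlety to avoid is extracting a weakly convergent subsequence of $v_n^*$ before reducing to one along which $\langle v_n^*,v_n-y\rangle$ attains its liminf — done in that order, one would only recover an inequality in the wrong direction.
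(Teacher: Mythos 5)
Your proof is correct: clause (i) via Mazur plus reflexivity of $X^*$, clause (ii) by the usual contradiction argument with a weakly convergent subsequence, and clause (iii) with the liminf-realizing subsequence extracted \emph{before} the weakly convergent one (the ordering you flag is indeed the only delicate point, and you handle it properly). Note that the paper itself does not prove this proposition --- it is quoted with references to Proposition 1.3.66 of Denkowski--Mig\'orski--Papageorgiou and Proposition 3.1 of Carl --- and your argument is essentially the standard proof given in those sources, so there is nothing to compare beyond that.
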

\noindent We also recall (see for instance Proposition 1.3.68 \cite{Denkowski2003}) that the sum of
two pseudomonotone multifunctions is pseudomonotone.

\section{Generalization of Lions-Aubin Lemma}\label{sec:lions}
\noindent For a Banach space $X$, $1\leq p\leq \infty$ and a finite time interval $I=(0,T)$ we
consider the standard spaces $L^p(I;X)$. Furthermore we denote by $BV(I;X)$ the space of functions
of bounded total variation on $I$. Let $\pi$ denote any finite partition of $I$ by a family of
disjoint subintervals $\{\sigma_i=(a_i,b_i)\}$  such that $\bar{I}=\bigcup_{i=1}^n\bar{\sigma}_i$.
Let $\C{F}$ denote the family of all such partitions. Then we define the total variation as
%$$
%\|x\|_{BV(I;X)} = \sup_{\pi\in \C{F}}\left\{\sum_{\sigma\in\pi}\sup_{a,b\in %\sigma}\|x(b)-x(a)\|_X\right\}.
%$$
$$
\|x\|_{BV(I;X)} = \sup_{\pi\in \C{F}}\left\{\sum_{\sigma_i\in\pi}\|x(b_i)-x(a_i)\|_X\right\}.
$$
As a generalization of above definition for $1\leq q < \infty$ we can define a seminorm
%$$
%\|x\|_{BV^q(I;X)}^q = \sup_{\pi\in %\C{F}}\left\{\sum_{\sigma\in\pi}\sup_{a,b\in\sigma}\|x(b)-x(a)\|^q_X\right\}.
%$$
$$
\|x\|_{BV^q(I;X)}^q = \sup_{\pi\in \C{F}}\left\{\sum_{\sigma_i\in\pi}\|x(b_i)-x(a_i)\|^q_X\right\}.
$$
For Banach spaces $X,Z$ such that $X\subset Z$ we introduce a vector space
\begin{equation}
M^{p,q}(I;X,Z)= L^p(I;X)\cap BV^q(I;Z).\nonumber
\end{equation}
Then $M^{p,q}(I;X,Z)$ is also a Banach space for
$1\leq p,q <\infty$ with the norm given by $\|\cdot\|_{L^p(I;X)}+\|\cdot\|_{BV^q(I;Z)}$.

%\noindent Before we move to the result of this section we recall two definitions

%\begin{defn} If $X$ is a Banach space and $I$ is a time interval then the functional $F:I\times X\to [0,\infty]$ is called coercive normal integrand provided
%\begin{itemize}
%\item $F$ is measurable with respect to $\C{L}(I)\otimes\C{B}(X)$,
%\item $F(t,\cdot)$ is l.s.c. for a.e. $t \in I$,
%\item for any $c\geq 0$ and a.e. $t\in (0,T)$ the set $\{v\in B: F(t,v)\leq c\}$ is compact.
%\end{itemize}
%\end{defn}

%\begin{defn} The family $\C{U}\in L^p(I,X)$ is tight with respect to coercive normal integrand $F$ provided
%$$
%\sup_{u\in \C{U}}\int_I F(t,u(t))\, dt < \infty
%$$
%\end{defn}

\noindent Let us recall Theorem 1 of \cite{Simon1987} (see also Theorem 1 of \cite{Rossi2003} and
Proposition 2.1 of \cite{Rossi2001}).
\begin{thm}\label{thm:simon}
Let $1\leq p<\infty$ and $X$ be a real Banach space. A subset $\C{G}\subset L^p(0,T;X)$ is
relatively compact in a Banach space $L^p(0,T;X)$ provided the following two conditions hold
\begin{itemize}
\item for every $0<t_1<t_2<T$ the set
$$
G(t_1,t_2) := \left\{\int_{t_1}^{t_2}u(t)\, dt: u\in \C{G} \right\}
$$
is relatively compact in $X$,
\item $\C{G}$ is strongly integrally equicontinuous i.e.
\begin{equation}
\lim_{h\to 0}\sup_{u\in \C{G}}\int_0^{T-h}\|u(t+h)-u(t)\|_{X}^p\ dt = 0.
\end{equation}
\end{itemize}
\end{thm}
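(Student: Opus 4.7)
The strategy is the classical one of Simon: approximate elements of $\C{G}$ by their right-averages in time, use the strong integral equicontinuity to control the approximation error uniformly in $u\in\C{G}$, and use the first hypothesis together with a vector-valued Arzel\`a-Ascoli argument to obtain compactness of the averaged family. Concretely, for $h>0$ define
$$
u_h(t) = \frac{1}{h}\int_t^{t+h} u(s)\,ds, \qquad t \in [0, T-h],
$$
and set $\C{G}_h = \{u_h : u\in\C{G}\}$. I would prove (i) that $\|u-u_h\|_{L^p(0,T-h;X)}\to 0$ as $h\to 0$ uniformly in $u\in\C{G}$, and (ii) that for each fixed $h$ and each $0<a<(T-h)/2$, the restriction $\C{G}_h|_{[a,T-h-a]}$ is relatively compact in $L^p(a,T-h-a;X)$. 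Combined with a careful handling of the boundary strips, these will give total boundedness of $\C{G}$ in $L^p(0,T;X)$, hence the desired relative compactness.

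For (i), Jensen's inequality applied to the convex function $\|\cdot\|_X^p$ gives
$$
\|u(t)-u_h(t)\|_X^p = \left\|\frac{1}{h}\int_0^h [u(t)-u(t+r)]\,dr\right\|_X^p \leq \frac{1}{h}\int_0^h \|u(t+r)-u(t)\|_X^p\,dr.
$$
Integrating in $t\in[0,T-h]$ and swapping the order of integration bounds the result by $\sup_{0<r\leq h}\int_0^{T-r}\|u(t+r)-u(t)\|_X^p\,dt$, which tends to zero uniformly in $u\in\C{G}$ by the strong integral equicontinuity hypothesis. For (ii), fix $h$ and $a$. For each $t\in[a,T-h-a]$ the element $h\,u_h(t)=\int_t^{t+h} u$ lies in $G(t,t+h)$, relatively compact in $X$ by the first hypothesis, so the pointwise values of $\C{G}_h$ lie in relatively compact subsets of $X$. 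For $a\leq s<t\leq T-h-a$ one rewrites
$$
u_h(t)-u_h(s) = \frac{1}{h}\int_0^{t-s}\bigl(u(s+h+\tau)-u(s+\tau)\bigr)\,d\tau,
$$
and a H\"older estimate together with the strong integral equicontinuity yields
$$
\|u_h(t)-u_h(s)\|_X \leq \frac{(t-s)^{1/p'}}{h}\,\varepsilon(h)^{1/p},
$$
where $\varepsilon(h)=\sup_{u\in\C{G}}\int_0^{T-h}\|u(t+h)-u(t)\|_X^p\,dt$ and $1/p+1/p'=1$. This modulus of continuity is uniform in $u\in\C{G}$, so the vector-valued Arzel\`a-Ascoli theorem gives relative compactness of $\C{G}_h|_{[a,T-h-a]}$ in $C([a,T-h-a];X)$, and hence in $L^p(a,T-h-a;X)$.

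To finish, given $\eta>0$, choose $h$ so that (i) produces an $L^p$-approximation error at most $\eta/3$, choose $a$ so that the contribution of the boundary strips $[0,a]$ and $[T-h-a,T]$ to the $L^p$-norm of any $u\in\C{G}$ is at most $\eta/3$, and extract from (ii) a finite $\eta/3$-net for $\C{G}_h|_{[a,T-h-a]}$ in $L^p$. Splicing these pieces yields a finite $\eta$-net for $\C{G}$ in $L^p(0,T;X)$, proving total boundedness. The main anticipated obstacle is precisely this boundary-strip control, which reduces to a preliminary $L^p$-boundedness of $\C{G}$: neither hypothesis alone bounds $\|u\|_{L^p}$ directly, so this boundedness must be extracted from the interplay of the two hypotheses (together with absolute continuity of the integral), and handling this interplay cleanly is the characteristic delicacy of Simon-type compactness theorems.
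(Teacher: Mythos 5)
Note first that the paper contains no proof of this statement: Theorem \ref{thm:simon} is recalled verbatim from Theorem 1 of \cite{Simon1987}, so there is no internal argument to compare yours against; it must be judged against the classical Simon-type proof, whose skeleton you have correctly reproduced. Your steps that are written out are sound: the Jensen/Fubini estimate showing $\|u-u_h\|_{L^p(0,T-h;X)}\to 0$ uniformly on $\C{G}$, the identity $u_h(t)-u_h(s)=\frac{1}{h}\int_0^{t-s}\bigl(u(s+h+\tau)-u(s+\tau)\bigr)\,d\tau$, and the observation that $h\,u_h(t)\in G(t,t+h)$ gives pointwise relative compactness on interior intervals.

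There are, however, two genuine gaps. First, your equicontinuity modulus $\frac{(t-s)^{1/p'}}{h}\varepsilon(h)^{1/p}$ is vacuous when $p=1$: then $1/p'=0$ and the estimate only bounds the oscillation of $u_h$ by the constant $\varepsilon(h)/h$, which is not a modulus of continuity, so the Arzel\`a--Ascoli step collapses precisely in a case the theorem covers ($1\leq p<\infty$). The standard repair is to first establish a uniform $L^p$ bound on $\C{G}$ and then smooth with a Lipschitz kernel, so that equicontinuity of the smoothed family comes from the kernel's Lipschitz constant times that bound, while pointwise relative compactness follows by approximating the convolution by sums $\sum_i\rho(t-s_i)\int_{s_i}^{s_{i+1}}u$, each term lying in a scaled $G$-set. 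Second, your concluding total-boundedness argument presupposes exactly what you defer: you need $\sup_{u\in\C{G}}\bigl(\int_0^a\|u(t)\|_X^p\,dt+\int_{T-a}^T\|u(t)\|_X^p\,dt\bigr)\to 0$ as $a\to 0$, and this is not a consequence of $L^p$-boundedness alone; you name it as ``the characteristic delicacy'' but supply no argument, and without it the $\eta/3$-net does not close. Both points are repairable inside your framework: the oscillation bound $\|u_h(t)-u_h(s)\|\leq h^{-1}T^{1/p'}\varepsilon(h)^{1/p}$ combined with boundedness of $\{u_h(t_0)\}=h^{-1}G(t_0,t_0+h)$ at a single interior $t_0$ yields $\sup_t\|u_h(t)\|\leq C_h$ uniformly in $u$, hence $\|u\|_{L^p(0,T-h;X)}\leq\|u-u_h\|_{L^p(0,T-h;X)}+T^{1/p}C_h$, with backward averages covering $(T-h,T)$; the endpoint strips are then handled by $\|u(t)\|_X^p\leq 2^{p-1}\bigl(\|u(t+s)-u(t)\|_X^p+\|u(t+s)\|_X^p\bigr)$, averaged over $s\in(\delta,2\delta)$, using the equicontinuity hypothesis for the first term and the global bound for the second. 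As written, though, your proposal establishes only interior compactness for $p>1$ and leaves unproved the boundedness and endpoint control in which the two hypotheses genuinely interact.
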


\noindent The following proposition is a consequence of Theorem \ref{thm:simon}.
\begin{prop}\label{prop:compactembedding}
Let $1\leq p,q<\infty$. Let $X_1\subset X_2\subset X_3$ be real Banach spaces such that $X_1$ is
reflexive, the embedding $X_1\subset X_2$ is compact and the embedding $X_2\subset X_3$ is
continuous. If a subset $\C{G}\subset M^{p,q}(I; X_1,X_3)$ is bounded, then it is relatively
compact in $L^p(I;X_2)$.
\end{prop}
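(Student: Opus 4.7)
The plan is to deduce the conclusion from Theorem \ref{thm:simon} applied with $X=X_2$, so I need to verify (i) relative compactness of $G(t_1,t_2)$ in $X_2$ for every $0<t_1<t_2<T$, and (ii) strong integral equicontinuity of $\mathcal{G}$ in $L^p(I;X_2)$.

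Condition (i) is straightforward. Since $u\in L^p(I;X_1)$, H\"older's inequality yields
$$
\left\|\int_{t_1}^{t_2}u(t)\,dt\right\|_{X_1}\leq (t_2-t_1)^{1-1/p}\|u\|_{L^p(I;X_1)},
$$
so boundedness of $\mathcal{G}$ in $L^p(I;X_1)$ makes $G(t_1,t_2)$ bounded in $X_1$, and the compact embedding $X_1\subset X_2$ delivers relative compactness in $X_2$.

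For condition (ii) the idea is to interpolate between the $X_1$-bound (coming from the $L^p$ part of the $M^{p,q}$-norm) and the smallness of time translations in $X_3$ (coming from the $BV^q$ part) by means of the Ehrling-type inequality: for every $\epsilon>0$ there exists $C(\epsilon)>0$ such that $\|v\|_{X_2}\leq \epsilon\|v\|_{X_1}+C(\epsilon)\|v\|_{X_3}$ for all $v\in X_1$. Applying this pointwise to $v=u(t+h)-u(t)$ and integrating gives
$$
\int_0^{T-h}\|u(t+h)-u(t)\|_{X_2}^p\,dt \leq 2^{2p-1}\epsilon^p\|u\|_{L^p(I;X_1)}^p + 2^{p-1}C(\epsilon)^p \int_0^{T-h}\|u(t+h)-u(t)\|_{X_3}^p\,dt.
$$
The first summand is uniformly small in $u\in\mathcal{G}$ by taking $\epsilon$ small. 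For the second, the key estimate is
$$
\int_0^{T-h}\|u(t+h)-u(t)\|_{X_3}^q\,dt \leq h\,\|u\|_{BV^q(I;X_3)}^q,
$$
which I would prove by a Fubini/shift argument: for each $s\in(0,h)$ the disjoint intervals $(s+kh,\,s+(k+1)h)$ extend to a partition of $I$ admissible in the definition of the $BV^q$-seminorm, so $\sum_k\|u(s+(k+1)h)-u(s+kh)\|_{X_3}^q\leq \|u\|_{BV^q(I;X_3)}^q$; integrating over $s\in(0,h)$ and substituting $t=s+kh$ piece by piece collapses this into the displayed inequality. To pass from the exponent $q$ to $p$ I would split cases: if $p\geq q$, the trivial one-interval partition gives $\|u(t+h)-u(t)\|_{X_3}\leq\|u\|_{BV^q(I;X_3)}$, which absorbs the extra $p-q$ powers; if $p<q$, H\"older's inequality in time does the job. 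In either case the $X_3$-integral is bounded by a positive power of $h$ times a power of $\|u\|_{M^{p,q}}$, hence tends to $0$ as $h\to 0$ uniformly on $\mathcal{G}$.

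Choosing first $\epsilon$ small to absorb the $X_1$-term and then $h$ small to kill the $X_3$-term verifies condition (ii), and Theorem \ref{thm:simon} concludes. The main obstacle is the shift estimate in $X_3$, specifically the Fubini-type collapse of partition-wise control into a translation integral; the rest is a routine combination of H\"older's inequality, Ehrling's lemma, and Theorem \ref{thm:simon}.
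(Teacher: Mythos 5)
Your proposal is correct and follows essentially the same route as the paper's own proof: Theorem \ref{thm:simon} with $X=X_2$, the H\"older bound showing $G(t_1,t_2)$ is bounded in $X_1$ hence relatively compact in $X_2$, the Ehrling lemma to interpolate the $X_2$-translates between $X_1$ and $X_3$, and the shift/Fubini estimate $\int_0^{T-h}\|u(t+h)-u(t)\|_{X_3}^q\,dt\le h\,\|u\|_{BV^q(I;X_3)}^q$ combined with the same case split between $p\ge q$ and $p<q$. I see no gaps.
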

\begin{proof}
\noindent We apply Theorem \ref{thm:simon} with $X=X_2$. Let us fix $0<t_1<t_2<T$ and let $v\in
G(t_1,t_2)$. For $u \in \C{G}$ we have
\begin{eqnarray}
\|v\|_{X_1}=\left\|\int_{t_1}^{t_2}u(t)\, dt\right\|_{X_1}\leq \int_{0}^{T} \|u(t)\|_{X_1}\, dt\leq
T^{1-\frac{1}{p}}\|u\|_{L^p(I;X_1)}.
\end{eqnarray}
Thus $G(t_1,t_2)$ is bounded in $X_1$ and therefore relatively compact in $X_2$.

\noindent It suffices to show the strong integral equicontinuity of $\C{G}$. Let $\sup_{u\in
\C{G}}\|u\|_{L^p(I;X_1)}^p=M$. We will use the Ehrling Lemma (see for instance \cite{Roubicek2005},
Lemma 7.6). Let us fix $\varepsilon>0$.  There exists $C>0$ such that for $v\in X_1$ we have
$\|v\|_{X_2}^p\leq \frac{\varepsilon}{2^pM}\|v\|_{X_1}^p+C\|v\|_{X_3}^p$. In particular, fixing
$h\in (0,T)$, for $u\in \C{G}$ and almost every $t\in (0,T-h)$ we have $\|u(t+h)-u(t)\|_{X_2}^p\leq
\frac{\varepsilon}{2^pM}\|u(t+h)-u(t)\|_{X_1}^p+C\|u(t+h)-u(t)\|_{X_3}^p$. Integrating this
inequality we get
\begin{eqnarray}
&&\int_0^{T-h}\|u(t+h)-u(t)\|_{X_2}^p\ dt\leq \nonumber\\
&&\leq\frac{\varepsilon}{2^pM}\int_0^{T-h}\|u(t+h)-u(t)\|_{X_1}^p\ dt+C\int_0^{T-h}\|u(t+h)-u(t)\|_{X_3}^p\ dt \leq \label{ineq:equicont}\nonumber\\
&&\leq \frac{\varepsilon}{2M}\int_0^{T-h}\|u(t+h)\|_{X_1}^p+\|u(t)\|_{X_1}^p\ dt+C\int_0^{T-h}\|u(t+h)-u(t)\|_{X_3}^p\ dt\leq \nonumber\\
&&\leq \varepsilon + C\int_0^{T-h}\|u(t+h)-u(t)\|_{X_3}^p\ dt.
\end{eqnarray}
Now let $\sup_{u\in \C{G}}\|u\|_{BV^q(I;X_3)}^q=S$. If $p \leq q$, then by the H\"{o}lder
inequality, we have
\begin{equation}\label{ineq:case1}\int_0^{T-h}\|u(t+h)-u(t)\|_{X_3}^p\ dt \leq T^{1-\frac{p}{q}}
\left(\int_0^{T-h}\|u(t+h)-u(t)\|_{X_3}^q\ dt\right)^{\frac{p}{q}}.\end{equation} If in turn $q<p$,
then
\begin{equation}
\label{ineq:case2}\int_0^{T-h}\|u(t+h)-u(t)\|_{X_3}^p\ dt
%\leq \int_0^{T-h}\|u(t+h)-u(t)\|_{X_3}^{p-q}\|u(t+h)-u(t)\|_{X_3}^q\ dt\leq \nonumber\\
\leq S^{\frac{p}{q}-1}\int_0^{T-h}\|u(t+h)-u(t)\|_{X_3}^q\ dt.
\end{equation}
We estimate the last term in (\ref{ineq:case1}) and (\ref{ineq:case2}) from above (taking, if
necessary, $u(t)=u(T)$, if $t>T$)
\begin{eqnarray}
&& \int_0^{T-h}\|u(t+h)-u(t)\|_{X_3}^q\ dt\leq \sum_{i=0}^{\lceil T/h-2\rceil}\int_{ih}^{ih+h}\|u(t+h)-u(t)\|_{X_3}^q\ dt=\nonumber\\
&&=\sum_{i=0}^{\lceil T/h-2\rceil}\int_{0}^{h}\|u(t+ih+h)-u(t+ih)\|_{X_3}^q\ dt=\nonumber\\
&&=\int_{0}^{h}\sum_{i=0}^{\lceil T/h-2\rceil}\|u(t+ih+h)-u(t+ih)\|_{X_3}^q\ dt\leq Sh.
\end{eqnarray}
Thus the last term in (\ref{ineq:equicont}) tends to $0$ uniformly in $u$ as $h\to 0$ and, since
$\varepsilon$ was arbitrary, we get the thesis.
\end{proof}
\noindent\textbf{Remark 1.} Note, that Theorem 3.2 in \cite{Ahmed2003} is a consequence of above
theorem. Compare also the Corollary 7.9 in \cite{Roubicek2005} where the case $p=1$ is excluded and
$X_3$ is assumed to have a predual space.
\section{Problem formulation and assumptions}\label{sec:prblm}
\noindent Let $V\subset H\subset V^*$ be an evolution triple, where $V$ is a reflexive and separable Banach space
and $H$ is a separable Hilbert space with the embeddings being continuous, dense and compact. Embedding
between $V$ and $H$ will be denoted by $i$. Furthermore let $U$ be a reflexive Banach space on
which the multivalued term will be defined. We use the notation $\C{V}=L^2(0,T;V)$,
$\C{H}=L^2(0,T;H)$, $\C{U}=L^2(0,T;U)$ and $\C{W}=\{u\in \C{V}, u' \in \C{V}^*\}$, where the
derivative is understood in the sense of distibutions. Duality parings and norms for all the spaces
will be denoted by the appropriate subscripts, for the space $V$ no subscript will be used. Scalar
product in $H$ will be denoted by $(\cdot,\cdot)$ and norm in $\mathbb{R}^n$ by $|\cdot|$. We
consider the operator $A:V\to V^*$ and the functional $J:U\to \mathbb{R}$ such that the following
assumptions hold
\begin{itemize}
\item[$H(A)$:]
\begin{itemize}
\item[$(i)$] $A$ is pseudomonotone,
\item[$(ii)$] $A$ satisfies the growth condition $\|A(v)\|_{V^*}\leq a+b\|v\|$ for every $v\in V$ with $a\geq 0, b>0$,
\item[$(iii)$] $A$ is coercive $\langle A(v),v\rangle\geq \alpha \|v\|^2-\beta\|v\|^2_{H}$ for every $v\in V$ with $\alpha>
0$ and $\beta\geq 0$,
\end{itemize}
\item[$H(J)$:]\begin{itemize}
\item[$(i)$] $J$ is locally Lipschitz,
\item[$(ii)$] $\partial J$ satisfies the growth condition $\|\xi \|_{U^*}\leq c(1+\|u\|_U)$ for every $u\in U$ and $\xi \in \partial J(u)$ with $c>0$.
\end{itemize}
\end{itemize}
\noindent Moreover we assume that
\begin{itemize}
\item[$H_0$:]$f\in \C{V}^*$ and $u_0\in H$.
\end{itemize}
\noindent We also impose the assumption concerning the space $U$
\begin{itemize}
\item[$H(U)$:] There exists the linear, continuous and compact mapping $\iota:V\to U$ such that the associated Nemytskii mapping $\bar{\iota}:M^{2,2}(0,T;V,V^*)\to \C{U}$ defined by $(\bar{\iota} v)(t)=\iota(v(t))$ is also compact.
\end{itemize}
\noindent Finally we impose the last assumption
\begin{itemize}
\item[$H_{aux}$:]One of the following holds
\begin{itemize}
\item[A)] There exists a linear and continuous mapping $p:H\to U$ such that for $v\in V$ we have $p(i(v))=\iota(v)$.
\item[B)] The constants $\alpha$ and $c$ satisfy the inequality $\alpha > c \|\iota\|_{\C{L}(V;U)}^2$.
\item[C)] For every $u\in U$ we have $J^0(u;-u)\leq d(1+\|u\|_U^\sigma)$ with $d\geq 0$ and $1\leq \sigma <2$.
\end{itemize}
\end{itemize}
\noindent The problem under consideration is as follows
\begin{eqnarray}
&& \mbox{find}\  u\in \C{W}\ \mbox{such that}\  u(0)=u_0 \ \mbox{and for a.e.}
\ t\in (0,T)\ \mbox{we have}\nonumber\\
&&u'(t)+Au(t)+\iota^* \partial J (\iota u(t)) \ni f(t).\label{eq:inclusionmain}
\end{eqnarray}
The last inclusion is understood in the following sense
\begin{eqnarray}
&&\mbox{there exists}\ \  \eta\in \C{V}^* \ \mbox{such that}\ u'(t)+Au(t)+\eta(t) = f(t)\ \mbox{for a.e.}\ t\in(0,T)\nonumber\\
&&\mbox{and}\ \ \langle\eta(t),v\rangle\in \langle \partial J(\iota u(t)),\iota v\rangle_{U^*\times U} \
\mbox{for a.e.}\ t\in(0,T)\ \mbox{and}\ v\in V.
\end{eqnarray}
\noindent\textbf{Remark 2.} The formulation (\ref{eq:inclusionmain}) puts into a unified framework
hemivariational inequalities originating from the initial and boundary value problem with
multivalued term defined on the problem domain (in this case we have multivalued source term, and
$U=H$, see \cite{Miettinen1996,Miettinen1999,Migorski2001}) and on the part $\Gamma_C$ of domain
boundary $\partial\Omega$ (this is the case if we have the multivalued, nonlinear and nonmonotone
boundary condition of Neumann-Robin type, $U=L^2(\Gamma_C)$ or $U=L^2(\Gamma_C;\mathbb{R}^n)$, see
\cite{Migorski2004}). A detailed discussion as well as examples of problems which satisfy the
assumptions will be given in Section \ref{sec:settings}.

\noindent\textbf{Remark 3.} For the sake of simplicity of further argument the assumptions given
above are not the most general ones under which the results hold. Possible generalizations include:
\begin{itemize}
\item The dependance of $A$ and $J$ on time variable. Time dependent operator $A$ for parabolic HVI is considered in \cite{Miettinen1999} and the case of both $A$ and $J$ depending on time is considered in \cite{Migorski2004} (see Remark 8.21 in \cite{Roubicek2005} on the Rothe method for the problem with the operator depending on time).
\item Instead of pseudomonotonicity one could assume that $A$ is a sum of two operators, one of which is pseudomonotone and the second one is weakly continuous. Such weak continuity allows to take into account the nonlinear terms of lower order which are not of monotone type (see \cite{Francu1994}).
\item More general coercivity conditions on $A$ can be assumed. For instance $\langle Av,v\rangle\geq c\|v\|^2-a\|v\|_V-\gamma(t)$ with $c>0$, $a\geq 0$ and $\gamma\in L^1(0,T)$ cf. \cite{Migorski2001}.
\item The case when the space $\C{V}$ is defined as $L^p(0,T;V)$ with $2<p<\infty$ can be considered. Then we can assume more general growth conditions on $A$ and $J$. For instance in \cite{Migorski2001} it is assumed that $\|A(t,v)\|_{V^*}\leq \beta(t)+c_1\|v\|^{p-1}$ and that for $\eta\in \partial j(x,\xi)$ we have $|\eta|\leq c(1+|\xi|^{p-1})$.
Note that $J$ is defined typically as the integral functional $J(u)=\int_{\omega}j(x,u(x))\, dx$
and assumptions on the integrand $j$ are given.
\end{itemize}
\noindent \textbf{Remark 4.} In this paper the abstract setting is considered. For a divergence
differential operator of Leray - Lions type on a Sobolev space pseudomonotnicity is implied by the
appropriate Leray - Lions type conditions (see, for instance, \cite{Berkovits1996} where conditions
that guarantee pseudomonotonicity on $W^{m,p}(\Omega)$, $1<p<\infty$, $m\geq 1$ are considered).

\noindent We conclude this section with the Lemma on pseudomonotonicity of Nemytskii operator with respect to the space $M^{2,2}(0,T;V,V^*)$. Note that the proof of this lemma is analogous to the proof of Theorem 2 (b) in \cite{Berkovits1996} (see also Proposition 1 from \cite{Papageorgiou1997} and Lemma 8.8 in \cite{Roubicek2005} for similar results). Lemma 8.8 of \cite{Roubicek2005} is most similar to Lemma \ref{lem:nemytskii}, but note that here no a priori bound in $L^\infty(0,T;H)$ is needed and the assumption on the bound of 2-variation which is used here is weaker then the bound on 1-variation as in \cite{Roubicek2005}.
\begin{lemma}\label{lem:nemytskii}
Let $A: V\to V^*$ satisfy $H(A)$ and let $\C{A}:\C{V}\to \C{V}^*$ be a Nemytskii operator for $A$ defined by $(\C{A}u)(t)=A(u(t))$. Then if, for a uniformly bounded sequence $\{u_n\}\subset M^{2,2}(0,T;V,V^*)$ such that $u_n\to u$ weakly in $\C{V}$ we have $\limsup_{n\to\infty} \langle \C{A}u_n,u_n-u\rangle_{\C{V}^*\times \C{V}}\leq 0$, then $\C{A}u_n \to \C{A}u$ weakly in $\C{V}^*$.
\end{lemma}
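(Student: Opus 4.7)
The proof will follow the standard template for pseudomonotonicity of Nemytskii operators, with the compactness input furnished by Proposition \ref{prop:compactembedding}. My plan is to upgrade weak $\C{V}$-convergence to strong $\C{H}$-convergence, extract a weak $\C{V}^*$-limit $\chi$ of $\C{A}u_n$ via the growth hypothesis, and identify $\chi = \C{A}u$ by an application of the pointwise pseudomonotonicity of $A$ at almost every time.

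For the first step, I would apply Proposition \ref{prop:compactembedding} with $X_1 = V$, $X_2 = H$, $X_3 = V^*$. The evolution triple provides $V \hookrightarrow H$ compactly and $H \hookrightarrow V^*$ continuously, and $M^{2,2}(V,V^*)$-boundedness is the standing hypothesis, so $u_n \to u$ strongly in $\C{H}$. Along a subsequence (not relabeled), $u_n(t) \to u(t)$ in $H$ for a.e.\ $t$ with $\|u_n(\cdot)\|_H$ dominated by a fixed $L^2(0,T)$ function, while the linear growth $H(A)(ii)$ makes $\{\C{A}u_n\}$ bounded in $\C{V}^*$, so a further extraction produces $\C{A}u_n \rightharpoonup \chi$ in $\C{V}^*$; by uniqueness of weak subsequential limits it suffices to prove $\chi = \C{A}u$. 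Setting $\alpha_n(t) = \langle A(u_n(t)), u_n(t) - u(t)\rangle$, the coercivity $H(A)(iii)$, linear growth, and Young's inequality give the pointwise lower bound $\alpha_n(t) \geq \frac{\alpha}{2}\|u_n(t)\|^2 - \beta\|u_n(t)\|_H^2 - C(1 + \|u(t)\|^2)$, which, thanks to the $H$-domination, majorizes the negative parts of $\alpha_n$ by a fixed $L^1(0,T)$ function; a Fatou argument combined with the hypothesis $\limsup_n \int_0^T \alpha_n(t)\, dt \leq 0$ then yields $\int_0^T \liminf_n \alpha_n(t)\, dt \leq 0$.

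The main obstacle is the pointwise identification of $\chi$. At a.e.\ $t$ I would like to apply the pseudomonotonicity axiom to $\{u_n(t)\}$, but this requires weak $V$-convergence, whereas only strong $H$-convergence is directly supplied. Case-splitting resolves this: on the set where $\{u_n(t)\}$ is unbounded in $V$ the coercive lower bound forces $\alpha_n(t) \to +\infty$; elsewhere, every bounded subsequence in $V$ must converge weakly to $u(t)$ (since the strong $H$-limit determines the weak $V$-limit), and pseudomonotonicity $H(A)(iii)$ applies at $t$. Combined with the integrated Fatou estimate this identifies $A(u_n(t)) \rightharpoonup A(u(t))$ in $V^*$ along a suitable subsequence for a.e.\ $t$. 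Vitali's convergence theorem, applied using the uniform $L^2$ bound on $\|A(u_n(\cdot))\|_{V^*}$ to supply uniform integrability of $t\mapsto \langle A(u_n(t)),v(t)\rangle$ for each $v \in \C{V}$, then lifts this a.e.\ pointwise weak convergence to weak convergence $\C{A}u_n \rightharpoonup \C{A}u$ in $\C{V}^*$, closing the argument.
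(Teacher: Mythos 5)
Your overall strategy coincides with the paper's: Proposition \ref{prop:compactembedding} with $X_1=V$, $X_2=H$, $X_3=V^*$ to obtain strong $\C{H}$-convergence and a.e.\ convergence in $H$, the coercivity/growth lower bound on $\xi_n(t)=\langle Au_n(t),u_n(t)-u(t)\rangle$, pointwise use of the pseudomonotonicity of $A$, and a final lifting to weak convergence of $\C{A}u_n$ in $\C{V}^*$ (your Vitali argument is a legitimate substitute for the paper's concluding Fatou estimate, and your $L^2$ dominating function for $\|u_n(\cdot)\|_H$ replaces the paper's device of adding $\beta\|u_n(t)\|_H^2$ inside Fatou). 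However, there is a genuine gap at the decisive step. From Fatou you only extract $\int_0^T\liminf_n\xi_n(t)\,dt\leq 0$, which, combined with your pointwise case analysis (which shows $\liminf_n\xi_n(t)\geq 0$ a.e.), yields only $\liminf_n\xi_n(t)=0$ a.e. That is not enough to ``identify $A(u_n(t))\rightharpoonup A(u(t))$ along a suitable subsequence for a.e.\ $t$'' in a usable way: the pseudomonotonicity axiom requires $\limsup_n\langle Au_n(t),u_n(t)-u(t)\rangle\leq 0$ at $t$, and with only liminf information the subsequence along which this holds depends on $t$. A $t$-dependent subsequence is useless for your final Vitali step, which needs one fixed index sequence along which $\langle Au_n(t),v(t)\rangle\to\langle Au(t),v(t)\rangle$ for a.e.\ $t$ simultaneously.

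The missing argument, which is the heart of the paper's proof, is the upgrade from ``$\liminf_n\xi_n=0$ a.e.'' to ``$\xi_n\to 0$ in $L^1(0,T)$, hence $\xi_n(t)\to 0$ a.e.\ along one further, $t$-independent subsequence.'' It follows from ingredients you already have: since $\liminf_n\xi_n(t)\geq 0$ a.e., Fatou together with the hypothesis $\limsup_n\int_0^T\xi_n\,dt\leq 0$ gives $\int_0^T\xi_n\,dt\to 0$; since $\xi_n^-(t)\to 0$ a.e.\ and $\xi_n^-$ is dominated by a fixed integrable function (your $L^2$-domination of $\|u_n(\cdot)\|_H$ plus the coercive lower bound), dominated convergence gives $\int_0^T\xi_n^-\,dt\to 0$; hence $\int_0^T|\xi_n|\,dt=\int_0^T\xi_n\,dt+2\int_0^T\xi_n^-\,dt\to 0$ and a common subsequence with a.e.\ convergence exists. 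Only then can pseudomonotonicity be applied at a.e.\ $t$ along one subsequence and fed into Vitali. A minor further point: your claim that on the set where $\{u_n(t)\}$ is unbounded in $V$ the lower bound forces $\xi_n(t)\to+\infty$ is loose, since unboundedness of a sequence does not mean its norms tend to infinity; argue instead along subsequences realizing the liminf, as the paper does. With these repairs your proof closes and is essentially the paper's, up to the cosmetic differences noted above.
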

\begin{proof} It is enough to show that the thesis holds for a subsequence. By the generalized Lions Aubin Compactness Lemma (see Proposition \ref{prop:compactembedding}) for a subsequence (still denoted by $n$) we have $u_n\to u$ strongly in $\C{H}$. Moreover, for yet another subsequence $u_n(t)\to u(t)$ strongly in $H$ for a.e. $t\in (0,T)$. We denote the set of measure zero on which the convergence does not hold by $N$. Now let us define $\xi_n(t)=\langle Au_n(t),u_n(t)-u(t)\rangle$. We have
\begin{eqnarray}\label{eq:boundbelowxi}
&& \xi_n(t)\geq \alpha\|u_n(t)\|^2-\beta\|u_n(t)\|_H^2-\|u(t)\|(a+b\|u_n(t)\|)\geq\\
&&\geq \frac{\alpha}{2}\|u_n(t)\|^2-\beta \|u_n(t)\|_H^2 - a\|u(t)\| - \frac{b^2}{2\alpha}\|u(t)\|^2.\nonumber
\end{eqnarray}
Now let $C=\{t \in [0,T]: \liminf_{n\to\infty}\xi_n(t)< 0\}$. This is the Lebesgue measurable subset of $[0,T]$. Suppose that $m(C)>0$, $m$ being one dimensional Lebesgue measure. For every $t\in C\setminus N$ the sequence $u_n(t)$ has a subsequence (still denoted by $n$) which is bounded in $V$ by (\ref{eq:boundbelowxi}) such that $\lim_{n\to\infty}\langle Au_n(t),u_n(t)-u(t) \rangle<0$. Again for a subsequence we have  $u_n(t)\to u(t)$ weakly in $V$, where the limit equals $u(t)$ since we can consider only $t\notin N$. By the pseudomonotonicity of $A$ we get $0\leq \liminf_{n\to \infty} \langle Au_n(t), u_n(t)-u(t)\rangle$, which is a contradiction. So $m(C)=0$, which means that $\liminf_{n\to\infty} \xi_n(t)\geq 0$ a.e. on $(0,T)$. From the Fatou Lemma we have 
\begin{eqnarray}
&&\beta\|u\|_H^2 \leq \int_0^T\liminf_{n\to\infty} \xi_n(t)\, dt + \beta\|u\|_H^2 \leq  \int_0^T \liminf_{n\to\infty} (\xi_n(t)+\beta\|u_n(t)\|_H^2) \, dt \leq \nonumber \\
&& \leq \liminf_{n\to\infty } \int _0^T  \xi_n(t)+ \beta\|u_n(t)\|_H^2 \, dt \leq \nonumber\\
&& \leq \liminf_{n\to\infty } \int _0^T  \xi_n(t)\, dt + \beta \|u\|_H^2\leq \limsup_{n\to\infty } \int _0^T  \xi_n(t) \, dt + \beta \|u\|_H^2 \leq \beta \|u\|_H^2.\nonumber
\end{eqnarray}
So $\int_0^T\xi_n(t)\, dt\to 0$ as $n\to \infty$. Now note that $|\xi_n(t)|=\xi_n(t)+2\xi_n^-(t)$ and $\xi_n^-(t)\to 0$ for a.e. $t\in (0,T)$. Since, by (\ref{eq:boundbelowxi}), for a.e. $t\in (0,T)$ we have $\xi_n(t) + \beta \|u_n(t)\|_H^2 \geq f(t)$ with $f \in L^1(0,T)$, then $ \xi_n^-(t)-\beta\|u_n(t)\|_H^2 \leq f^-(t)$. Invoking Fatou Lemma again we have $\limsup\int_0^T\xi^-(t)\, dt \leq 0$ and furthermore $\int_0^T\xi_n^-(t)\, dt \to 0$ as $n\to \infty$. We deduce that $\xi_n\to 0$ in $L^1(0,T)$ and, for a subsequence (still denoted by the same subscript), $\xi_n(t)\to 0$ for a.e. $t\in (0,T)$. Since, for this subsequence, $u_n(t)\to u(t)$ weakly in $V$, then by pseudomonotonicity of $A$ it follows that $Au_n(t)\to Au(t)$ weakly in $V^*$ and $\langle Au_n(t), u_n(t)\rangle \to \langle Au(t), u(t) \rangle$. For any $v\in \C{V}$ we have 
\begin{eqnarray}
&& \langle \C{A}u, u-v\rangle_{\C{V}^*\times \C{V}} = \int_0^T\langle Au(t), u(t)-v(t) \rangle\, dt = \int_0^T\lim_{n\to\infty} \langle Au_n(t), u_n(t)-v(t)\rangle\, dt=\nonumber \\
&&= -\beta\|u\|_H^2 + \int_0^T\lim_{n\to\infty} (\langle Au_n(t), u_n(t)-v(t)\rangle + \beta\|u_n(t)\|_H^2)\, dt .\nonumber
\end{eqnarray}
We can apply Fatou Lemma one last time to get 
\begin{eqnarray}
&& \langle \C{A}u, u-v\rangle_{\C{V}^*\times \C{V}} \leq \liminf_{n\to\infty}\int_0^T\langle Au_n(t), u_n(t)-v(t)\rangle\, dt =\nonumber\\
&&= \liminf_{n\to\infty} (\langle \C{A}u_n, u_n-u\rangle_{\C{V}^*\times \C{V}}+\langle \C{A}u_n, u-v\rangle_{\C{V}^*\times \C{V}})\leq\nonumber \\
&&\leq\liminf_{n\to\infty} \langle \C{A}u_n, u-v\rangle_{\C{V}^*\times \C{V}}.
\end{eqnarray}
Since $v$ is arbitrary we obtain the thesis.
\end{proof}

\section{The Rothe problem}\label{sec:rothe}
\noindent In this section we will work with a sequence of time-steps $\tau_n\to 0$ such that each
time step $\tau_n>0$ and the value $T/\tau_n$ is an integer, which we denote by $N_n$. The subscipt
$n$ will be omitted in the sequel in order to simplify the notation, so we will write $N, \tau$
instead of $N_n, \tau_n$.

\noindent We define the piecewise constant approximation of the function $f\in \C{V}^*$. For this
purpose we take the sequence of positive numbers $\epsilon(\tau)\to 0$ and the sequence of
mollifiers $\rho_\epsilon:\mathbb{R}\to\mathbb{R}$ which belong to $C^\infty(\mathbb{R})$ and are nonnegative, supported on
$[-\epsilon,\epsilon]$ and $\int_{\mathbb{R}}\rho_{\epsilon}(x)\, dx = 1$. The function $f$ is
regularized according to the formula
$$f_\epsilon(t)=\int_0^T \rho_\epsilon\left(t+\epsilon\frac{T-2t}{T}-s\right)f(s)\, ds.$$
Note that $f_\epsilon\in C^1(0,T;V^*)$ (see \cite{Roubicek2005}, Lemma 7.2). The piecewise constant
approximation for $f$ is given by
$$
\bar{f}_\tau(t):=f^k_\tau=f_{\epsilon(\tau)}(k\tau)\ \mbox{for}\ t\in((k-1)\tau,k\tau],\
k\in\{1,\ldots,N\}.
$$
Following \cite{Roubicek2005}, Lemma 8.7, we have $\bar{f}_\tau\to f$ in $\C{V}^*$ when $\tau\to
0$. Note (see Remark 8.15 in \cite{Roubicek2005}) that the smoothing of $f$ is not the only
possible approach here. It is also possible to take the Cl\'{e}ment zero-order quasi interpolant
$f_\tau^k=\frac{1}{\tau}\int_{(k-1)\tau}^{k\tau}f(\theta)\,d\theta$.

\noindent We approximate the initial condition by elements of $V$. Let $\{u_{0\tau}\}\subset V$ be
a sequence such that $u_{0\tau}\to u_0$ strongly in $H$ and $\|u_{0\tau}\|\leq C/\sqrt{\tau}$ for
some constant $C>0$.

\noindent We define the following Rothe problem
\begin{eqnarray}
&& \mbox{find the sequence}\ \{u_\tau^k\}_{k=0}^N \subset V\  \mbox{such that}\  u_\tau^0=u_{0\tau} \ \mbox{and}\nonumber\\
&& \left(\frac{u^{k}_\tau-u^{k-1}_\tau}{\tau},v\right)_H+\langle Au_\tau^k,v\rangle+
\langle\partial J (\iota u^k_\tau), \iota v\rangle_{U^*\times U} \ni \langle f^k_\tau, v\rangle
\label{eq:rotheproblem}\\
&& \mbox{for all}\  v\in V\  \mbox{and}\ k=1,\ldots,N.\nonumber
\end{eqnarray}
The above formula is known as the implicit or backward Euler scheme. Existence of solutions to the
Rothe problem follows from the following
\begin{lemma}
Under assumptions $H(A), H(J), H_0, H(U)$ and $H_{aux}$ there exists $\tau_0>0$ such that the
problem (\ref{eq:rotheproblem}) has a solution for $\tau\in (0,\tau_0)$.
\end{lemma}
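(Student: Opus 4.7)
The plan is to reduce the Rothe problem to a sequence of elliptic inclusions, one per time step, and apply a surjectivity theorem for coercive pseudomonotone multivalued operators. Fix $k\in\{1,\ldots,N\}$ and assume $u_\tau^{k-1}\in V$ has already been constructed (with $u_\tau^0=u_{0\tau}$). Rewrite (\ref{eq:rotheproblem}) as: find $u\in V$ such that
\begin{equation*}
T_\tau(u)\ni g_\tau^{k},\qquad T_\tau(u):=\tfrac{1}{\tau}i^*iu+A(u)+\iota^*\partial J(\iota u),\qquad g_\tau^{k}:=f_\tau^{k}+\tfrac{1}{\tau}i^*iu_\tau^{k-1},
\end{equation*}
where $i:V\to H$ is the embedding. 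It then suffices to show that for some $\tau_0>0$ and every $\tau\in(0,\tau_0)$ the operator $T_\tau:V\to 2^{V^*}$ is bounded, pseudomonotone and coercive, and to invoke the standard surjectivity theorem (e.g.\ Theorem~1.3.70 of \cite{Denkowski2003}) which then yields the required $u=u_\tau^{k}$; iterating over $k$ finishes the proof.

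The first step, pseudomonotonicity of $T_\tau$, is structural. The linear monotone continuous term $\tfrac{1}{\tau}i^*i$ is pseudomonotone; $A$ is pseudomonotone by $H(A)(i)$; and $\iota^*\partial J\circ\iota$ is pseudomonotone by checking the criterion of Proposition \ref{prop:pseudo}: values are nonempty, closed, convex in $V^*$ (properties of the Clarke gradient), boundedness follows from $H(J)(ii)$, and whenever $v_n\to v$ weakly in $V$ with $\xi_n=\iota^*\eta_n$, $\eta_n\in\partial J(\iota v_n)$, $\xi_n\to \xi$ weakly in $V^*$, the compactness of $\iota$ gives $\iota v_n\to \iota v$ strongly in $U$, hence (extracting a weak limit $\eta$ of $\eta_n$ in $U^*$) $\eta\in\partial J(\iota v)$ by the closed-graph property of the Clarke subdifferential, $\xi=\iota^*\eta\in\iota^*\partial J(\iota v)$, and $\langle\xi_n,v_n\rangle=\langle\eta_n,\iota v_n\rangle\to\langle\eta,\iota v\rangle=\langle\xi,v\rangle$. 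The sum of pseudomonotone operators is pseudomonotone.

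The main obstacle is coercivity, and it is here that the case split $H_{aux}$ enters. Using $H(A)(iii)$ and $H(J)(ii)$ one has, for any selection $\xi\in\iota^*\partial J(\iota v)$,
\begin{equation*}
\langle T_\tau(v),v\rangle\ \geq\ \bigl(\tfrac{1}{\tau}-\beta\bigr)\|v\|_H^2+\alpha\|v\|^2-J^0(\iota v;-\iota v).
\end{equation*}
In case A one estimates $J^0(\iota v;-\iota v)\le c\|\iota v\|_U(1+\|\iota v\|_U)\le C_1+C_2\|v\|_H^2$ via the factorization $\iota=p\circ i$, so requiring $\tau_0<(\beta+C_2)^{-1}$ suffices. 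In case B the same bound gives $J^0(\iota v;-\iota v)\le C_1+c\|\iota\|_{\C{L}(V;U)}^2\|v\|^2$, and the assumed strict inequality $\alpha>c\|\iota\|_{\C{L}(V;U)}^2$ leaves a positive coefficient in front of $\|v\|^2$ while any $\tau_0<1/\beta$ (or arbitrary if $\beta=0$) handles the $\|v\|_H^2$ part. In case C the bound $J^0(\iota v;-\iota v)\le d(1+\|\iota v\|_U^\sigma)$ with $\sigma<2$ is absorbed into $\alpha\|v\|^2$ by Young's inequality, leaving the same condition on $\tau_0$.

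Once coercivity (uniformly in the choice of selection and in $k$) is established for $\tau\in(0,\tau_0)$, the surjectivity theorem produces $u_\tau^{k}\in V$ with $T_\tau(u_\tau^{k})\ni g_\tau^{k}$, i.e.\ a solution of (\ref{eq:rotheproblem}), and induction on $k$ yields the whole sequence $\{u_\tau^{k}\}_{k=0}^{N}$.
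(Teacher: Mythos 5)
Your proposal is correct and follows essentially the same route as the paper: reduce each time step to an elliptic inclusion, verify pseudomonotonicity of $\frac{1}{\tau}i^*i+A+\iota^*\partial J(\iota\,\cdot)$ termwise via Proposition \ref{prop:pseudo} and the closed graph of the Clarke subdifferential, check coercivity case by case under $H_{aux}$ A), B), C), and conclude with the surjectivity theorem for coercive pseudomonotone multivalued operators (Theorem 1.3.70 of \cite{Denkowski2003}). The only cosmetic difference is that you bound the multivalued term uniformly by $J^0(\iota v;-\iota v)$ in all three cases (the paper uses the growth condition directly in A) and B)), which is equivalent.
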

\begin{proof}
We show that, given $u^{k-1}_\tau \in V$, we can find $u^k_\tau\in V$ such that
(\ref{eq:rotheproblem}) holds. We need to show that the range of multifunction $V\ni v\to
Lv=\frac{i^* iv}{\tau}+Av+\iota^*
\partial J(\iota v)\in 2^{V^*}$ constitutes the whole space $V^*$. We will use the surjectivity
theorem for pseudomonotone operators (see for instance Theorem 1.3.70 in \cite{Denkowski2003}). We
need to show that $L$ is coercive (in the sense that $\lim_{\|v\|\to\infty}\frac{\inf_{v^*\in
Lv}\langle v^*,v\rangle}{\|v\|}=\infty$) and pseudomonotone.

\noindent \textbf{Claim 1.} \textit{$L$ is pseudomonotone.} We verify this condition for all
components of $L$ separately. For this purpose we use Proposition \ref{prop:pseudo}. The operator
$\frac{i^*i}{\tau}$ satisfies the conditions $(i)-(iii)$ trivially. As for $\iota^* \partial
J(\iota u)$ the condition $(i)$ follows from the fact that the Clarke subdifferential has nonempty,
convex and (for reflexive space) weakly compact values. The condition $(ii)$ follows from the
growth assumption on $\partial J$. In order to verify $(iii)$ let us take $v_n\to v$ weakly in $V$
and $\xi_n\to \xi$ weakly in $V^*$ with $\xi_n\in \iota^*\partial J(\iota v_n)$. Obviously $\iota
v_n\to \iota v$ strongly in $U$. Define $\eta_n \in \partial J(\iota v_n)$ such that $\xi_n=\iota^*\eta_n$. By the growth condition $H(J)(ii)$ it follows that, for a subsequence still denoted by the same subscript, $\eta_n\to \eta$ weakly in $U^*$. By the closedness of the
graph of $\partial J$ in $U\times U^*_{w}$ topology (see \cite{Clarke1990}, Proposition 2.1.5), we
get $\eta \in \partial J(\iota v)$. Obviously $\xi=\iota^*\eta$ and $\xi\in \iota^*\partial J(\iota v)$. Moreover $\langle v_n,\xi_n\rangle=\langle \iota v_n,
\eta_n\rangle_{U^*\times U} \to \langle \iota v, \eta\rangle_{U^*\times U}=\langle v,\xi\rangle$, where by uniqueness convergence holds for the whole sequence.

\noindent \textbf{Claim 2.} \textit{$L$ is coercive.} Assume that $v^*\in Lv$. We estimate $\langle
v^*,v\rangle$ from below. For some $\eta\in \partial J(\iota v)$ we have
\begin{equation}\label{eq:estimatecorec}
\langle v^*,v \rangle \geq \frac{1}{\tau}\|v\|_H^2+\alpha \|v\|^2 - \beta \|v\|_H^2+\langle \eta,
\iota v\rangle_{U^*\times U}.
\end{equation}
We proceed for cases $A), B), C)$ separately. For $A)$ and $B)$, by the growth condition
$$
\langle v^*,v \rangle \geq \left(\frac{1}{\tau}-\beta\right)\|v\|_H^2+\alpha \|v\|^2-c(1+\|\iota
v\|_U)\|\iota v\|_U.
$$
In the case $A)$ we have $\|\iota v\|_U^2\leq \|p\|_{\C{L}(H,U)}^2\|v\|_H^2$, so
$$
\langle v^*,v \rangle \geq \left(\frac{1}{\tau}-\beta-c\|p\|_{\C{L}(H,U)}^2\right)\|v\|_H^2+\alpha
\|v\|^2-c\|\iota\|_{\C{L}(V;U)}\|v\|.
$$
We require $\tau_0 = \frac{1}{\beta+c\|p\|_{\C{L}(H,U)}}$. In the case $B)$ we get
$$
\langle v^*,v \rangle \geq
\left(\frac{1}{\tau}-\beta\right)\|v\|_H^2+(\alpha-c\|\iota\|^2_{\C{L}(V;U)})\|v\|^2-c\|\iota\|_{\C{L}(V;U)}\|v\|.
$$
To have coercivity we need to set $\tau_0=\frac{1}{\beta}$. Finally if $C)$ holds, then we get
\begin{eqnarray}
&&\langle \eta, \iota v\rangle_{U^*\times U}\geq -J^0(\iota v;-\iota v) \geq\nonumber\\
&&\geq - d(1+\|\iota
v\|_U^\sigma)\geq -d -\|\iota\|_{\C{L}(V;U)}^\sigma\|v\|^\sigma\geq -d
-\frac{\alpha}{2}\|v\|^2-C,\nonumber\end{eqnarray}
where $C>0$ depends on $\alpha, \sigma$ and $\|\iota\|_{\C{L}(V;U)}$.
Combining the last estimate with (\ref{eq:estimatecorec}) we get
$$
\langle v^*,v \rangle \geq \left(\frac{1}{\tau}-\beta\right)\|v\|_H^2+\frac{\alpha}{2} \|v\|^2 - d
- C.
$$
Again setting $\tau_0=\frac{1}{\beta}$ we get the desired property.
\end{proof}

\noindent Next lemma establishes the estimates which are satisfied by the solutions of Rothe
problem.
\begin{lemma}\label{lem:bounds}
Under assumptions $H(A), H(J), H_0, H(U)$ and $H_{aux}$ there exists $\tau_0>0$ such that for all
$\tau\in (0,\tau_0)$ the solutions of Rothe problem (\ref{eq:rotheproblem}) satisfy
\begin{eqnarray}
&&\max_{k=1,\ldots,N}\|u_\tau^k\|_H\leq\mbox{const},\label{eq:bound1}\\
&&\sum_{k=1}^N\|u_\tau^k-u_\tau^{k-1}\|_H^2\leq\mbox{const},\\
&&\tau \sum_{k=1}^N\|u_\tau^k\|^2\leq\mbox{const},\label{eq:bound3}
\end{eqnarray}
with the constants independent on $\tau$.
\end{lemma}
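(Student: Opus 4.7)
The plan is to derive the three bounds simultaneously by testing the Rothe inclusion (\ref{eq:rotheproblem}) with $v = u_\tau^k$, selecting $\xi_\tau^k \in \partial J(\iota u_\tau^k)$ that realises the inclusion, and summing from $k=1$ to an arbitrary $j \leq N$. Multiplying through by $\tau$ and applying the discrete chain rule $2(a-b,a)_H = \|a\|_H^2 - \|b\|_H^2 + \|a-b\|_H^2$ produces the telescoping identity
\begin{equation*}
\|u_\tau^j\|_H^2 + \sum_{k=1}^j \|u_\tau^k-u_\tau^{k-1}\|_H^2 + 2\tau\sum_{k=1}^j \langle Au_\tau^k, u_\tau^k\rangle + 2\tau\sum_{k=1}^j \langle \xi_\tau^k, \iota u_\tau^k\rangle_{U^*\times U} = \|u_{0\tau}\|_H^2 + 2\tau\sum_{k=1}^j \langle f_\tau^k, u_\tau^k\rangle.
\end{equation*}
Since $u_{0\tau}\to u_0$ strongly in $H$, the initial datum term is uniformly bounded. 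Coercivity $H(A)(iii)$ controls the $A$-term from below by $2\alpha\tau\sum_k\|u_\tau^k\|^2 - 2\beta\tau\sum_k\|u_\tau^k\|_H^2$, and Young's inequality bounds the source by $\frac{\alpha}{2}\tau\sum_k\|u_\tau^k\|^2 + C\|\bar f_\tau\|_{\C{V}^*}^2$, with the last summand uniformly bounded since $\bar f_\tau \to f$ in $\C{V}^*$.

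The main work is bounding the subgradient term from below, and this is where the three alternatives of $H_{aux}$ come in, in exactly the same fashion as in the previous existence lemma. Under A), the factoring $\iota = p\circ i$ combined with $H(J)(ii)$ yields $|\langle \xi_\tau^k, \iota u_\tau^k\rangle| \leq c\|p\|_{\C{L}(H,U)}^2\|u_\tau^k\|_H^2 + c\|p\|_{\C{L}(H,U)}\|u_\tau^k\|_H$, which only feeds into the $H$-norm part of a discrete Gronwall. Under B), the growth bound gives $|\langle \xi_\tau^k, \iota u_\tau^k\rangle| \leq c\|\iota\|_{\C{L}(V;U)}^2\|u_\tau^k\|^2 + c\|\iota\|_{\C{L}(V;U)}\|u_\tau^k\|$, and the quadratic piece is absorbed into the coercive $\alpha\|u_\tau^k\|^2$ thanks to the strict inequality $\alpha > c\|\iota\|_{\C{L}(V;U)}^2$. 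Under C), the Clarke subgradient inequality gives $\langle \xi_\tau^k, \iota u_\tau^k\rangle \geq -J^0(\iota u_\tau^k; -\iota u_\tau^k) \geq -d(1+\|\iota u_\tau^k\|_U^\sigma)$, and Young's inequality with conjugate exponents $2/\sigma$ and $2/(2-\sigma)$, legitimate since $\sigma<2$, extracts an arbitrarily small constant times $\|u_\tau^k\|^2$ that is again absorbed into the coercive part.

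Collecting all estimates one arrives at an inequality of the form
\begin{equation*}
\|u_\tau^j\|_H^2 + \sum_{k=1}^j\|u_\tau^k-u_\tau^{k-1}\|_H^2 + C_0\tau\sum_{k=1}^j\|u_\tau^k\|^2 \leq C_1 + C_2\tau\sum_{k=1}^j\|u_\tau^k\|_H^2,
\end{equation*}
valid for every $j\in\{1,\ldots,N\}$ with $C_0>0$ and constants independent of $\tau$. Choosing $\tau_0$ small enough that $C_2\tau_0<1$, so that the $k=j$ term on the right can be subtracted into the left-hand side, the discrete Gronwall inequality applied to $y_j:=\|u_\tau^j\|_H^2$ produces (\ref{eq:bound1}); once this maximum bound is known, the displayed inequality directly yields (\ref{eq:bound3}) and the sum-of-squared-jumps bound. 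The delicate point I anticipate is the argument in case C), where one must tune the Young's inequality parameter so that the resulting coefficient in front of $\|u_\tau^k\|^2$ is strictly smaller than $\alpha$, and one must also verify that the final threshold on $\tau_0$ is compatible with the one already imposed by the existence lemma.
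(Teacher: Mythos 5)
Your proposal follows essentially the same route as the paper: test (\ref{eq:rotheproblem}) with $v=u_\tau^k$, use the identity $2(a-b,a)_H=\|a\|_H^2-\|b\|_H^2+\|a-b\|_H^2$, bound the subgradient term from below case by case according to $H_{aux}$, sum over $k$, and close with a discrete Gronwall inequality for $\tau$ below a threshold $\tau_0$. One quantitative point, however, does not close as written: you fix the Young parameter for the source term at $\alpha/2$ before splitting into cases, so in case B) the resulting coefficient of $\tau\sum_k\|u_\tau^k\|^2$ is $2\alpha-2c\|\iota\|_{\C{L}(V;U)}^2-\tfrac{\alpha}{2}-\delta$, which is negative whenever $c\|\iota\|_{\C{L}(V;U)}^2>\tfrac{3\alpha}{4}$, a regime allowed by $H_{aux}$ B) (which only requires $c\|\iota\|_{\C{L}(V;U)}^2<\alpha$). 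The paper avoids this by keeping the Young parameter $\varepsilon$ free in the one-step estimate and choosing it only after the case distinction, namely $\varepsilon=(\alpha-c\|\iota\|_{\C{L}(V;U)}^2)/2$ in case B); with that adjustment your argument coincides with the paper's. Note also that the tuning you flag as delicate in case C) is in fact harmless there, since $\sigma<2$ lets Young's inequality extract an arbitrarily small coefficient in front of $\|u_\tau^k\|^2$; the case where the parameter choice genuinely matters is B).
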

\begin{proof} We take $v=u^k_\tau$ in (\ref{eq:rotheproblem}), which gives for $\varepsilon > 0$ and $k=1,\ldots,N$
\begin{eqnarray}
&& \frac{1}{\tau}\|u^k_\tau\|^2_H+\alpha\|u^k_\tau\|^2+\langle \xi_\tau^k,\iota
u^k_\tau\rangle_{U^*\times U}\leq\nonumber\\
&&\leq \beta\|u^k_\tau\|_H^2+\frac{1}{2\varepsilon}\|f^k_\tau\|^2_{V^*}+\frac{\varepsilon}{2}\|u^k_\tau\|^2+\frac{1}{\tau}(u^{k-1}_\tau,u^k_\tau)\nonumber
\end{eqnarray}
with $\xi_\tau^k\in \partial J(\iota u^k_\tau)$. We use the relation $\|a\|^2-(a,b)=\|a\|^2/2 -
\|b\|^2/2+\|a-b\|^2/2$ to obtain
\begin{eqnarray}\label{eq:estimateonestep}
&&\left(\frac{1}{2\tau}-\beta\right)\|u^k_\tau\|^2_H+\frac{1}{2\tau}\|u^{k}_\tau-u^{k-1}_\tau\|_H^2+\left(\alpha-\frac{\varepsilon}{2}\right)\|u^k_\tau\|^2+\\
&&+\langle
\xi_\tau^k,\iota u^k_\tau\rangle_{U^*\times U}\leq\frac{1}{2\varepsilon}\|f^k_\tau\|^2_{V^*}+\frac{1}{2\tau}\|u^{k-1}_\tau\|_H^2.\nonumber
\end{eqnarray}
Recall that
$$\langle \xi^k_\tau, \iota u^k_\tau\rangle_{U^*\times U}\geq \begin{cases}-C_1-c\|p\|^2_{\C{L}(H,U)}\|u^k_\tau\|^2_H-\frac{\alpha}{2}\|u^k_\tau\|^2& \mbox{if $A)$ holds,}\\ (-c\|\iota\|^2_{\C{L}(U,V)}-\delta)\|u^k_\tau\|^2-C_2& \mbox{if $B)$ holds,}\\-C_3-\frac{\alpha}{2}\|u^k_\tau\|^2 & \mbox{if $C)$ holds,}\end{cases}$$
where $C_1>0$ depends on $c,\alpha, \|\iota\|_{\C{L}(V;U)}$, $\delta>0$ is arbitrary, $C_2>0$
depends on $c, \delta, \|\iota\|_{\C{L}(V;U)}$ and $C_3>0$ depends on $d, \alpha, \sigma,
\|\iota\|_{\C{L}(V;U)}$. From now on we proceed separately for the cases $A), B)$ and $C)$. In the
case $A)$ we take $\varepsilon = \frac{\alpha}{2}$ to get
\begin{eqnarray}
&&\left(\frac{1}{2\tau}-\beta-c\|p\|^2_{\C{L}(H,U)}\right)\|u^k_\tau\|^2_H+\frac{1}{2\tau}\|u^{k}_\tau-u^{k-1}_\tau\|_H^2+\\
&&+\frac{\alpha}{4}\|u^k_\tau\|^2\leq
\frac{1}{\alpha}\|f^k_\tau\|^2_{V^*}+\frac{1}{2\tau}\|u^{k-1}_\tau\|_H^2+C_1.\nonumber
\end{eqnarray}
Summing above inequalities for $k=1,\ldots,n$, where $1\leq n\leq N$, we have
\begin{eqnarray}
&&\|u^n_\tau\|_H^2+\sum_{k=1}^n\|u^k_\tau-u^{k-1}_\tau\|_H^2+\frac{\alpha\tau}{2}\sum_{k=1}^n\|u_\tau^k\|^2\leq\\
&&\leq 2TC_1 + 2 \tau (\beta + c\|p\|^2_{\C{L}(H,U)})
\sum_{k=1}^n\|u^k_\tau\|_H^2+\frac{2\|\bar{f}_\tau\|^2_{\C{V}^*}}{\alpha}+\|u^0_\tau\|_H^2.\nonumber
\end{eqnarray}
Now if $\tau < 1 / (4(\beta + c\|p\|^2_{\C{L}(H,U)}))$, by a discrete Gronwall inequality (see e.g.
\cite{Roubicek2005} (1.68)-(1.69)), we have (\ref{eq:bound1})-(\ref{eq:bound3}).

\noindent In the case $B)$, for $\delta = \varepsilon =
(\alpha-c\|\iota\|^2_{\C{L}(U,V)})/2$ we get
\begin{eqnarray}
&&\left(\frac{1}{2\tau}-\beta\right)\|u^k_\tau\|^2_H+\frac{1}{2\tau}\|u^{k}_\tau-u^{k-1}_\tau\|_H^2+\frac{\alpha-c\|\iota\|^2_{\C{L}(U,V)}}{4}\|u^k_\tau\|^2\leq\\
&&\leq C_4\|f^k_\tau\|^2_{V^*}+\frac{1}{2\tau}\|u^{k-1}_\tau\|_H^2+C_5,\nonumber
\end{eqnarray}
where $C_4 = \frac{1}{\alpha-c\|\iota\|^2_{\C{L}(U,V)}}$ and $C_5>0$ depends on $\alpha, c,
\|\iota\|^2_{\C{L}(U,V)}$. In analogy to the previous case we get
(\ref{eq:bound1})-(\ref{eq:bound3}) for $\tau < 1/\beta$. Bounds in the case $C)$ are obtained in
an analogous way.
\end{proof}

\section{Convergence of the Rothe method}\label{sec:limit}
\noindent We define piecewise linear and piecewise constant interpolants $u_\tau \in C([0,T];V)$
and $\bar{u}_\tau\in L^\infty(0,T; V)$ by the formulae
\begin{eqnarray}
&&u_\tau(t) = \left(\frac{t}{\tau}-k+1\right)u^k_\tau + \left(k-\frac{t}{\tau}\right)u^{k-1}_\tau\ \mbox{for}\  t\in[(k-1)\tau,k\tau],\nonumber\\
&&\bar{u}_\tau(t) = u^k_\tau\ \mbox{for a.e.}\  t\in((k-1)\tau,k\tau].\nonumber
\end{eqnarray}
where $k=1,\ldots,T/\tau$.

The sequences $\{u_{\tau_n}\}_{n=1}^\infty$ and $\{\bar{u}_{\tau_n}\}_{n=1}^\infty$ are known as
the Rothe sequences. Observe, that $u_\tau$ has a distributional derivative $u'_\tau\in
L^\infty(0,T;V)$ given by $u'_\tau(t) = \frac{u^k_\tau-u^{k-1}_\tau}{\tau}$ for almost every $t\in
((k-1)\tau,k\tau)$. So, since $u^k_\tau$ solves the Rothe problem, we have for almost every $t\in
(0,T)$
$$
(u'_\tau(t),v)_H + \langle A \bar{u}_\tau(t), v\rangle + \langle \xi_\tau(t), \iota
v\rangle_{U^*\times U} = \langle\bar{f}_\tau(t),v\rangle\ \mbox{for}\ v\in V,
$$
with $u_\tau(0)=u_{0\tau}$ and $\xi_\tau(t)=\xi^k_\tau\in\partial J(\iota u^k_\tau)=\partial
J(\iota \bar{u}_\tau(t))$ for $t\in ((k-1)\tau,k\tau]$. Defining the Nemytskii operator
$\C{A}:\C{V}\to \C{V}^*$ as $(\C{A}v)(t)=A(v(t))$, we have
\begin{equation}
(u'_\tau,v)_{\C{H}} + \langle \C{A} \bar{u}_\tau, v\rangle_{\C{V}^*\times\C{V}} +  \langle
\xi_{\tau}, \bar{\iota} v\rangle_{\C{U}^*\times
\C{U}}=\langle\bar{f}_\tau,v\rangle_{\C{V}^*\times\C{V}}\ \mbox{for}\ v\in
\C{V}.\label{eq:nemytskii}
\end{equation}
\begin{lemma}\label{lem:boundsnemytskii}
Under assumptions $H(A), H(J), H_0, H(U)$ and $H_{aux}$ there exists $\tau_0>0$ such that for all
$\tau\in (0,\tau_0)$, the piecewise constant and piecewise linear interpolants built on the
solutions of the Rothe problem satisfy
\begin{eqnarray}
&&\|\bar{u}_{\tau}\|_{\C{V}}\leq \mbox{const},\label{eq:bigbound1}\\
&&\|\bar{u}_{\tau}\|_{L^\infty(0,T;H)}\leq \mbox{const},\\
&&\|u_{\tau}\|_{C(0,T;H)}\leq \mbox{const},\label{eq:bigbound4}\\
&&\|u_\tau\|_{\C{V}}\leq \mbox{const},\label{eq:bigbound5}\\
&&\|u'_\tau\|_{\C{V}^*}\leq \mbox{const}\label{eq:bigbound6},\\
&&\|\C{A}\bar{u}_\tau\|_{\C{V}^*}\leq \mbox{const}\label{eq:bigbound7},\\
&&\|\xi_\tau\|_{\C{U}^*}\leq \mbox{const},\label{eq:bigbound8}\\
&&\|\bar{u}_{\tau}\|_{BV^2(0,T;V^*)}\leq \mbox{const}\label{eq:bigboundvar}.
\end{eqnarray}
with the constants independent on $\tau$.
\end{lemma}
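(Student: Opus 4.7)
The plan is to derive each of the eight bounds by combining the discrete estimates of Lemma \ref{lem:bounds} with the growth conditions $H(A)(ii)$, $H(J)(ii)$ and the Rothe identity (\ref{eq:nemytskii}). The scheme has two layers: first, translate the discrete bounds into time-integrated bounds on the interpolants $\bar{u}_\tau$ and $u_\tau$; then feed the resulting controls back into (\ref{eq:nemytskii}) to handle $\C{A}\bar{u}_\tau$, $\xi_\tau$ and $u'_\tau$; finally deduce the $BV^2$ seminorm from the $\C{V}^*$ bound on $u'_\tau$.

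The first group of estimates is essentially tautological. Since $\|\bar{u}_\tau\|_{\C{V}}^2=\tau\sum_{k=1}^N\|u^k_\tau\|^2$ and $\|\bar{u}_\tau\|_{L^\infty(0,T;H)}=\max_k\|u^k_\tau\|_H$, the bound (\ref{eq:bigbound1}) and the $L^\infty(0,T;H)$ estimate follow directly from (\ref{eq:bound3}) and (\ref{eq:bound1}). For the piecewise linear interpolant, the convex combination structure on each subinterval $[(k-1)\tau,k\tau]$, together with boundedness of $\|u_{0\tau}\|_H$ (since $u_{0\tau}\to u_0$ in $H$), yields (\ref{eq:bigbound4}) and (\ref{eq:bigbound5}). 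Applying $H(A)(ii)$ and $H(J)(ii)$ pointwise in $t$ and integrating then gives (\ref{eq:bigbound7}) and (\ref{eq:bigbound8}) once (\ref{eq:bigbound1}) is in hand. The bound (\ref{eq:bigbound6}) follows by rearranging (\ref{eq:nemytskii}) into $u'_\tau=\bar{f}_\tau-\C{A}\bar{u}_\tau-\bar{\iota}^*\xi_\tau$ in $\C{V}^*$, invoking the triangle inequality, the estimate $\|\bar{\iota}^*\xi_\tau\|_{\C{V}^*}\leq\|\iota\|_{\C{L}(V;U)}\|\xi_\tau\|_{\C{U}^*}$, and the uniform boundedness of $\bar{f}_\tau$ in $\C{V}^*$ (guaranteed by its convergence to $f$ in $\C{V}^*$ noted in Section \ref{sec:rothe}).

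I expect the $BV^2$ bound (\ref{eq:bigboundvar}) to be the main obstacle. The idea is that for any partition $\pi=\{(a_i,b_i)\}$ of $(0,T)$, the jump $\bar{u}_\tau(b_i)-\bar{u}_\tau(a_i)$ equals $\int_{s_i}^{t_i}u'_\tau(r)\,dr$, where $s_i=\lceil a_i/\tau\rceil\tau$ and $t_i=\lceil b_i/\tau\rceil\tau$ are the grid points at which $\bar{u}_\tau$ attains its jump values. By Cauchy--Schwarz in $V^*$ together with $t_i-s_i\leq T$, one obtains $\|\bar{u}_\tau(b_i)-\bar{u}_\tau(a_i)\|_{V^*}^2\leq T\int_{s_i}^{t_i}\|u'_\tau(r)\|_{V^*}^2\,dr$. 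Summing over $i$ and using that the intervals $(s_i,t_i)$ are pairwise essentially disjoint produces the uniform bound $\sum_i\|\bar{u}_\tau(b_i)-\bar{u}_\tau(a_i)\|_{V^*}^2\leq T\|u'_\tau\|_{\C{V}^*}^2$, which is independent of $\pi$ and yields (\ref{eq:bigboundvar}). The delicate point is the verification of essential disjointness of the enclosing intervals $(s_i,t_i)$ when the partition endpoints $b_i$ and $a_{i+1}$ happen to lie in the same grid cell; in this case $s_{i+1}=t_i$ and the intervals overlap only at a single point, which is enough to preserve the integral estimate.
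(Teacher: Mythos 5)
Your overall scheme coincides with the paper's: the first three bounds are read off from Lemma \ref{lem:bounds}, the bounds on $\C{A}\bar{u}_\tau$, $\xi_\tau$ and $u'_\tau$ come from the growth conditions and the identity (\ref{eq:nemytskii}), and the $BV^2$ seminorm is reduced to $\|u'_\tau\|_{\C{V}^*}$. Your treatment of (\ref{eq:bigboundvar}) is correct and is essentially the paper's argument in integral disguise: writing the jump as $\int_{s_i}^{t_i}u'_\tau(r)\,dr$ and using Cauchy--Schwarz plus the essential disjointness of the enclosing intervals (which holds because in a partition in the sense of the $BV$ definition one has $b_i\leq a_{i+1}$, hence $t_i\leq s_{i+1}$) gives the same bound $T\|u'_\tau\|^2_{\C{V}^*}$ that the paper obtains by a discrete Cauchy--Schwarz on the telescoped differences $u^{m_j}_\tau-u^{m_{j-1}}_\tau$.

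There is, however, one genuine gap: your justification of (\ref{eq:bigbound5}). From the convex-combination structure one gets $\|u_\tau\|^2_{\C{V}}\leq\tau\sum_{k=0}^{N}\|u^k_\tau\|^2$, and Lemma \ref{lem:bounds} controls only $\tau\sum_{k=1}^{N}\|u^k_\tau\|^2$; the remaining term is $\tau\|u_{0\tau}\|^2$, measured in the $V$-norm. Boundedness of $\|u_{0\tau}\|_H$ (which is all you invoke, and which does suffice for (\ref{eq:bigbound4})) gives no control whatsoever of $\|u_{0\tau}\|$: an approximation of $u_0\in H$ by elements of $V$ can have $V$-norm blowing up arbitrarily fast as $\tau\to 0$, in which case $\tau\|u_{0\tau}\|^2$ is unbounded and (\ref{eq:bigbound5}) fails. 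The missing ingredient is the requirement, built into the construction of the Rothe scheme in Section \ref{sec:rothe}, that $\|u_{0\tau}\|\leq C/\sqrt{\tau}$, which yields $\tau\|u_{0\tau}\|^2\leq C^2$; this is exactly what the paper's proof uses at this point. With that one line added, your argument is complete and matches the paper's.
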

\begin{proof}
Estimates (\ref{eq:bigbound1})-(\ref{eq:bigbound4}) follow directly from Lemma \ref{lem:bounds},
since
\begin{center}
$\|\bar{u}_{\tau}\|^2_{\C{V}}=\tau \sum_{i=1}^{N}\|u^k_\tau\|^2$,
$\|\bar{u}_{\tau}\|_{L^\infty(0,T;H)}=\max_{k=1,\ldots,N}\|u^k_\tau\|_H$ and
$\|u_{\tau}\|_{C(0,T;H)}\leq \max_{k=0,\ldots,N}\|u^k_\tau\|_H$.
\end{center}
\noindent The simple calculation shows us that $\|u_\tau\|_{\C{V}}^2\leq
\tau\sum_{k=0}^N\|u^k_\tau\|_V^2$. This, together with the fact, that $\|u^0_\tau\|\leq
C/\sqrt{\tau}$, by Lemma \ref{lem:bounds} gives (\ref{eq:bigbound5}).

\noindent To prove (\ref{eq:bigbound6}) let us consider the inclusion (\ref{eq:nemytskii}). We have
\begin{eqnarray}
&&\|u'_\tau\|_{\C{V}^*}=\sup_{\|v\|_{\C{V}}\leq 1}\left|(u'_\tau,v)_{\C{H}}\right| = \nonumber\\
&&=\sup_{\|v\|_{\C{V}}\leq 1}\left|\langle\bar{f}_\tau,v\rangle_{\C{V}^*\times\C{V}} - \langle \C{A} \bar{u}_\tau, v\rangle_{\C{V}^*\times\C{V}} - \int_0^T \langle \xi_\tau(t), \iota v(t)\rangle_{U^*\times U}\ dt\right|\leq \nonumber\\
&&\leq \|\bar{f}_\tau\|_{\C{V}^*}+\sqrt{\int_0^T\|A \bar{u}_\tau(t)\|_{V^*}^2\
dt}+\|\iota\|_{\C{L}(V;U)}\sqrt{\int_0^T\|\xi_\tau(t)\|^2_{U^*}\ dt}\leq \nonumber
\\&&\leq\|\bar{f}_\tau\|_{\C{V}^*}+\sqrt{2a^2T+2b^2\|\bar{u}_{\tau}\|^2_{\C{V}}}+
\|\iota\|_{\C{L}(V;U)}\sqrt{2c^2T+2c^2\|\iota\|_{\C{L}(V;U)}^2\|\bar{u}_{\tau}\|^2_{\C{V}}}.\label{eq:derivative}
\end{eqnarray}
Desired bound is obtained by (\ref{eq:bigbound1}). Estimates that appear in
(\ref{eq:derivative}) prove also (\ref{eq:bigbound7}) and (\ref{eq:bigbound8}). It remains to prove (\ref{eq:bigboundvar}).
Let us assume that the seminorm $BV^2(0,T;V^*)$ of piecewise constant function $\bar{u}_\tau$ is realized by some division $0=t_0<t_1<\ldots< t_k=T$. Each $t_j$ is in some interval $((m_j-1)\tau, m_j\tau]$, so $\bar{u}_\tau(t_j) = u^{m_j}_\tau$ with $m_0=0$ and $m_k=N$ and $m_{i+1}>m_i$ for $i=1,\ldots,N-1$. Thus
$$\|\bar{u}_{\tau}\|^2_{BV^2(0,T;V^*)}=\sum_{j=1}^{k}\|u^{m_j}_\tau-u^{m_{j-1}}_\tau\|^2_{V^*}.$$
We use the inequality
$$
\|u^{m_j}_\tau-u^{m_{j-1}}_\tau\|^2_{V^*} \leq (m_j-m_{j-1}) \sum_{i=m_{j-1}+1}^{m_j} \|u^{i}_\tau-u^{i-1}_\tau\|^2_{V^*}.
$$
Thus
\begin{eqnarray}
&&\|\bar{u}_{\tau}\|^2_{BV^2(0,T;V^*)}\leq \sum_{j=1}^{k} \left((m_j-m_{j-1}) \sum_{i=m_{j-1}+1}^{m_j} \|u^{i}_\tau-u^{i-1}_\tau\|^2_{V^*}\right)\leq\nonumber\\
&& \leq \left( \sum_{j=1}^{k} (m_j-m_{j-1}-1) \right)\sum_{i=1}^N \|u^{i}_\tau-u^{i-1}_\tau\|^2_{V^*} \leq N \tau\tau \sum_{i=1}^N \left\|\frac{u^{i}_\tau-u^{i-1}_\tau}{\tau}\right\|^2_{V^*} =\nonumber \\
&& = T \int_0^T\|u_\tau'(t)\|_{V^*}^2\ dt.\nonumber
\end{eqnarray}
The last term is bounded by (\ref{eq:bigbound6}), which ends the proof.
\end{proof}

\begin{thm}\label{thm:existence}
Under assumptions $H(A), H(J), H_0, H(U)$ and $H_{aux}$ the problem (\ref{eq:inclusionmain}) has a
solution $u$. Furthermore if $\bar{u}_\tau$ and $u_\tau$ are piecewise constant and piecewise
linear interpolants built on the solutions of the Rothe problem, then, for a subsequence,
$u_\tau\to u$ weakly in $\C{W}$ and weakly$*$ in $L^\infty(0,T;H)$ and $\bar{u}_\tau\to u$ weakly
in $\C{V}$ and weakly$*$ in $L^\infty(0,T;H)$.
\end{thm}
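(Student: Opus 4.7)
My plan is to extract a common weak limit from the uniform estimates of Lemma~\ref{lem:boundsnemytskii}, pass to the limit in the Rothe identity~(\ref{eq:nemytskii}), and then separately identify the weak limits of $\C{A}\bar u_\tau$ and of $\xi_\tau$. The first identification will be carried out via the Nemytskii pseudomonotonicity Lemma~\ref{lem:nemytskii}, the second via the graph-closure of Clarke's subdifferential in $U\times U^*_w$. The strong convergence $\bar\iota\bar u_\tau\to\iota u$ in $\C{U}$, furnished by assumption $H(U)$ together with Proposition~\ref{prop:compactembedding} applied to the $M^{2,2}(0,T;V,V^*)$-bound~(\ref{eq:bigboundvar}), is the ingredient that ties the two identifications together.

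To begin, the bounds~(\ref{eq:bigbound1})--(\ref{eq:bigboundvar}) allow me to select one subsequence along which $u_\tau\to u$ weakly in $\C{W}$ and weakly$*$ in $L^\infty(0,T;H)$, $\bar u_\tau\to\tilde u$ weakly in $\C{V}$ and weakly$*$ in $L^\infty(0,T;H)$, $\C{A}\bar u_\tau\to\chi$ weakly in $\C{V}^*$, $\xi_\tau\to\xi$ weakly in $\C{U}^*$, and $\bar\iota\bar u_\tau\to\iota\tilde u$ strongly in $\C{U}$. A direct computation shows
$$
\|\bar u_\tau-u_\tau\|_{\C{H}}^2=\tfrac{\tau}{3}\sum_{k=1}^{N}\|u^k_\tau-u^{k-1}_\tau\|_H^2,
$$
which tends to zero by Lemma~\ref{lem:bounds}, identifying $\tilde u=u$. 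Weak continuity of the trace $v\mapsto v(0)\colon\C{W}\to H$ combined with $u_\tau(0)=u_{0\tau}\to u_0$ in $H$ then forces $u(0)=u_0$.

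The crux is to verify $\chi=\C{A}u$. By Lemma~\ref{lem:nemytskii} it suffices to establish $\limsup\langle\C{A}\bar u_\tau,\bar u_\tau-u\rangle_{\C{V}^*\times\C{V}}\leq 0$. Rewriting via~(\ref{eq:nemytskii}) I get
$$
\langle\C{A}\bar u_\tau,\bar u_\tau-u\rangle=\langle\bar f_\tau,\bar u_\tau-u\rangle-(u'_\tau,\bar u_\tau-u)_{\C{H}}-\langle\xi_\tau,\bar\iota(\bar u_\tau-u)\rangle_{\C{U}^*\times\C{U}}.
$$
The first summand vanishes because $\bar f_\tau\to f$ strongly in $\C{V}^*$ and $\bar u_\tau\to u$ weakly in $\C{V}$; the third vanishes because $\xi_\tau$ is bounded in $\C{U}^*$ and $\bar\iota\bar u_\tau\to\iota u$ strongly in $\C{U}$. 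For the middle term the discrete chain rule yields
$$
(u'_\tau,\bar u_\tau)_{\C{H}}=\tfrac12\bigl(\|u_\tau(T)\|_H^2-\|u_{0\tau}\|_H^2\bigr)+\tfrac12\sum_{k=1}^{N}\|u^k_\tau-u^{k-1}_\tau\|_H^2\geq\tfrac12\bigl(\|u_\tau(T)\|_H^2-\|u_{0\tau}\|_H^2\bigr),
$$
whereas $(u'_\tau,u)_{\C{H}}=\langle u'_\tau,u\rangle_{\C{V}^*\times\C{V}}\to\langle u',u\rangle=\tfrac12(\|u(T)\|_H^2-\|u_0\|_H^2)$ by the standard integration by parts in $\C{W}$. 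Since $v\mapsto v(T)$ is weakly continuous from $\C{W}$ into $H$, one has $\liminf\|u_\tau(T)\|_H^2\geq\|u(T)\|_H^2$, and assembling these estimates gives the desired $\limsup\leq 0$, hence $\chi=\C{A}u$.

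It remains to identify $\xi$. I would extract a further subsequence with $\iota\bar u_\tau(t)\to\iota u(t)$ strongly in $U$ for a.e.\ $t$, invoke Mazur's lemma to write $\xi$ as a strong limit in $\C{U}^*$ of convex combinations of $\xi_\tau$, and appeal to the upper semicontinuity of $\partial J$ in the $U\times U^*_w$ topology (Proposition~2.1.5 of \cite{Clarke1990}) to conclude $\xi(t)\in\partial J(\iota u(t))$ a.e. Passing to the limit in~(\ref{eq:nemytskii}) then gives $u'+\C{A}u+\iota^*\xi=f$ in $\C{V}^*$, which together with $u(0)=u_0$ exhibits $u$ as a solution of~(\ref{eq:inclusionmain}), and the stated convergences are exactly the ones isolated above. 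The main obstacle I anticipate is precisely the $\limsup$ step: the pairing $(u'_\tau,\bar u_\tau)_{\C{H}}$ mixes the piecewise-linear $u_\tau$ with the piecewise-constant $\bar u_\tau$ and cannot be handled by pure weak--strong duality, so the discrete chain-rule identity and the weak continuity of the endpoint trace on $\C{W}$ are indispensable; note also that the $BV^2(0,T;V^*)$-bound~(\ref{eq:bigboundvar}) is the single ingredient that simultaneously feeds Proposition~\ref{prop:compactembedding} and the hypothesis of Lemma~\ref{lem:nemytskii}.
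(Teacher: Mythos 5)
Your proposal is correct and follows essentially the same route as the paper: the same a priori bounds, the same $\limsup\langle\C{A}\bar u_\tau,\bar u_\tau-u\rangle\leq 0$ estimate fed into Lemma~\ref{lem:nemytskii}, and the same use of $H(U)$ with a.e.\ strong convergence of $\iota\bar u_\tau(t)$ to identify $\xi(t)\in\partial J(\iota u(t))$. The only cosmetic deviations are that you identify the limits of $u_\tau$ and $\bar u_\tau$ through $\|\bar u_\tau-u_\tau\|_{\C{H}}\to 0$ rather than $\|\bar u_\tau-u_\tau\|_{\C{V}^*}\to 0$, you treat the term $(u'_\tau,\bar u_\tau-u)_{\C{H}}$ via the discrete chain rule and weak lower semicontinuity of $\|u_\tau(T)\|_H$ instead of the paper's decomposition involving $\|u_\tau(T)-u(T)\|_H^2\geq 0$, and you conclude the inclusion for $\xi$ by Mazur's lemma plus the closedness of the graph of $\partial J$ in $U\times U^*_w$, which is in substance the proof of the Aubin--Cellina convergence theorem that the paper cites directly.
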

\begin{proof}
From the bounds obtained in Lemma \ref{lem:boundsnemytskii}, possibly for a subsequence, we get
\begin{eqnarray}
&&\bar{u}_{\tau}\to u\ \mbox{weakly in}\ \C{V}\ \mbox{and weakly$*$ in}\ L^\infty(0,T;H),\label{eq:convuvstar}\\
&&u_{\tau}\to u_1\ \mbox{weakly in}\ \C{V} \ \mbox{and weakly$*$ in}\ L^\infty(0,T;H),\\
&&u'_{\tau}\to u_2\ \mbox{weakly in}\ \C{V}^*,\label{eq:convuprimevstar}\\
&&\C{A}\bar{u}_{\tau}\to \eta \ \mbox{weakly in}\ \C{V}^*,\\
&&\xi_\tau\to \xi \ \mbox{weakly in}\ \C{U}^*.\label{eq:convxiustar}
\end{eqnarray}

\noindent A standard argument shows that $u_1'=u_2$. To show that $u=u_1$ we observe that
\begin{equation}
\|\bar{u}_{\tau}-u_\tau\|_{\C{V}^*}^2=\sum_{k=1}^N\int_{(k-1)\tau}^{k\tau}\left(k\tau-t\right)^2\left\|\frac{u^k_\tau-u^{k-1}_\tau}{\tau}\right\|_{V^*}^2\
dt=\frac{\tau^2}{3}\|u'_\tau\|_{\C{V^*}}^2,
\end{equation}
which means that $\bar{u}_\tau-u_{\tau} \to 0$ strongly in $\C{V}^*$ as $\tau\to 0$, and, in
con\-se\-quence $u=u_1$.

\noindent It follows that $u_\tau\to u$ strongly in $L^2(0,T;H)$ and weakly in $C([0,T];H)$. This
also implies that $u_{0\tau}=u_\tau(0)\to u(0)$ weakly in $H$, so $u(0)=u_0$.

\noindent A passage to the limit in (\ref{eq:nemytskii}) gives
$$
u'+\eta+\bar{\iota}^*\xi=f.
$$
We observe that, by $H(U)$, we have $\bar{\iota}\bar{u}_{\tau}\to \bar{\iota}u$ strongly in $\C{U}$
and, furthermore, for a subsequence $\iota \bar{u}_{\tau}(t)\to \iota u(t)$ strongly in $U$ for a.e. $t\in (0,T)$.
Moreover $\xi_\tau\to\xi$ weakly in $L^1(0,T;U^*)$. Since $\partial J:U\to 2^{U^*}$ has nonempty,
closed and convex values and is upper semicontinuous from $U$ furnished with strong topology into
$U^*$ furnished with weak topology (see \cite{Denkowski2003a}, Proposition 5.6.10), by the Convergence
Theorem of Aubin and Cellina (see \cite{Aubin1984}, Theorem 1, Section 1.4), we deduce that
$\xi(t)\in \partial J(\iota u(t))$ for a.e. $t\in (0,T)$. In order to show that $u$ satisfies the
inclusion (\ref{eq:inclusionmain}), it suffices to prove that $\eta=\C{A}u$. To this end, let us
estimate
\begin{eqnarray}
&&\limsup_{\tau\to 0}\langle\C{A}\bar{u}_\tau,\bar{u}_\tau-u\rangle_{\C{V}^*\times\C{V}}\leq
\limsup_{\tau\to 0}\langle\bar{f}_\tau,\bar{u}_\tau-u\rangle_{\C{V}^*\times\C{V}} -\\
&&-\liminf_{\tau\to
0}\langle u_\tau ' ,\bar{u}_\tau-u\rangle_{\C{V}^*\times\C{V}}-\liminf_{\tau\to 0}\langle \xi_\tau,
\bar{\iota}(\bar{u}_\tau-u)\rangle_{\C{U}^*\times\C{U}}.\nonumber
\end{eqnarray}
Since $\bar{f}_\tau\to f$ strongly in $\C{V}^*$, by (\ref{eq:convuvstar}), we get $\lim_{\tau\to
0}\langle\bar{f}_\tau,\bar{u}_\tau-u\rangle_{\C{V}^*\times\C{V}}=0.$ Moreover, since
$\bar{\iota}\bar{u}_\tau\to \bar{\iota}u$ strongly in $\C{U}$, by (\ref{eq:convxiustar}), we have
$\lim_{\tau\to 0}\langle \xi_\tau, \bar{\iota}(\bar{u}_\tau-u)\rangle_{\C{U}^*\times\C{U}}=0$. Now
we observe that
\begin{eqnarray}
&&\langle u_\tau ' ,\bar{u}_\tau-u\rangle_{\C{V}^*\times\C{V}} = \langle u_\tau '
,\bar{u}_\tau-u_\tau\rangle_{\C{V}^*\times\C{V}} +\nonumber\\
&&+\frac{1}{2}(\|u_\tau(T)-u(T)\|_H^2-\|u_\tau(0)-u(0)\|_H^2) + \langle u', u_\tau -
u\rangle_{\C{V}^*\times\C{V}},\nonumber
\end{eqnarray}
so, noting that $\langle u_\tau ' ,\bar{u}_\tau-u_\tau\rangle_{\C{V}^*\times\C{V}} \geq 0$, we
obtain
$$
\liminf_{\tau\to 0}\langle u_\tau ' ,\bar{u}_\tau-u\rangle_{\C{V}^*\times\C{V}} \geq 0.
$$
Thus we have
$$
\limsup_{\tau\to 0}\langle\C{A}\bar{u}_\tau,\bar{u}_\tau-u\rangle_{\C{V}^*\times\C{V}}\leq 0.
$$
We are in a position to apply Lemma \ref{lem:nemytskii} which gives $\eta=\C{A}u$. Thus $u$
solves (\ref{eq:inclusionmain}).
\end{proof}

\noindent \textbf{Remark 5.} Note that we have also proved that any cluster point of $u_\tau$ and
$\bar{u}_\tau$, in the sense (\ref{eq:convuvstar})-(\ref{eq:convuprimevstar}), solves the problem
(\ref{eq:inclusionmain}). It is not known, however, whether there are solutions which are not
limits of the interpolants built on the solutions of Rothe problem.

\section{Uniqueness and strong convergence}\label{sec:uniq}
\noindent In this section we assume the strong monotonicity type relation for $A$ and relaxed
monotonicity on $J$.
\begin{itemize}
\item[$H(A)_1$]: assumptions $H(A)$ hold and $A$ satisfies the monotonicity type relation $\langle Au-Av,u-v\rangle\geq m_1 \|u-v\|^2-m_2\|u-v\|_H^2$ for every $u, v\in V$ with $m_1 \geq 0$ and $m_2 > 0$,
\item[$H(A)_2$]: assumptions $H(A)$ hold and the Nemytskii mapping $\C{A}:\C{V}\to\C{V}^*$ is of class $(S_+)$ with respect to the space $M^{2,2}(0,T;V,V^*)$, that is if $u_n\to u$ weakly in $\C{V}$ and $u_n$ is bounded in $M^{2,2}(0,T;V,V^*)$ then $\limsup_{n\to\infty}\langle\C{A} u_n,u_n-u\rangle_{\C{V}^*\times\C{V}}\leq 0$ implies that $u_n\to u$ strongly in $\C{V}$,
\item[$H(J)_1$]: assumptions $H(J)$ hold and $J$ satisfies the relaxed monotonicity condition $\langle \xi - \eta, u - v\rangle_{U\times U^*} \geq - m_3\|u-v\|_U^2$ for every $u, v\in V$ and $\xi\in \partial J(u), \eta\in \partial J(v)$ with $m_3 > 0$,
\item[$H_{const}$]: either $H_{aux}$ A) holds or $m_1 \geq m_3 \|\iota\|_{\C{L}(V;U)}^2$.
\end{itemize}

\noindent\textbf{Remark 6.} The assumption $H(A)_1$ for the divergence differential
Leray-Lions operator is guaranteed by appropriate Leray-Lions type conditions. For $H(A)_2$ to hold it suffices that the operator $A$ is of class $(S_+)$, by an argument analogous to Theorem 2(c) in \cite{Berkovits1996}.

\noindent\textbf{Remark 7.} The relaxed monotonicity condition $H(J)_1$ (which is associated with
the semiconvexity of the functional $J$) was already used to prove the uniqueness of solutions to
the first order evolution parabolic hemivariational inequalities in \cite{Liu2005} and second order
ones in \cite{Migorski2005}.

\noindent\textbf{Remark 8.} Note that $H(A)_1$ allows the case $m_1=0$, but if the inequality in
$H_{const}$ holds, then it must be $m_1>0$.

\begin{thm}\label{thm:unique}
Under assumptions $H(A)_1$, $H(J)_1$, $H_0$, $H(U)$, $H_{aux}$ and $H_{const}$, the solution to the
problem (\ref{eq:inclusionmain}) is unique.
\end{thm}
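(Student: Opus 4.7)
The plan is the standard energy-estimate/Gronwall argument for uniqueness, adapted to the two alternative cases contained in $H_{const}$. Let $u_1, u_2 \in \C{W}$ be two solutions of (\ref{eq:inclusionmain}) with $u_1(0)=u_2(0)=u_0$, and let $\eta_1, \eta_2 \in \C{V}^*$ be the associated selections with $\langle \eta_i(t), v\rangle = \langle \zeta_i(t), \iota v\rangle_{U^*\times U}$ for some measurable $\zeta_i(t)\in\partial J(\iota u_i(t))$. Subtracting the two inclusions gives $(u_1-u_2)'(t) + Au_1(t) - Au_2(t) + \iota^*(\zeta_1(t)-\zeta_2(t)) = 0$ almost everywhere. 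The first step is to test this equality against $u_1(t)-u_2(t) \in V$ and integrate over $(0,s)$ for arbitrary $s\in(0,T]$. Since $u_1-u_2\in\C{W}$, the integration-by-parts formula for the Gelfand triple yields
\begin{equation*}
\int_0^s\langle (u_1-u_2)'(t),u_1(t)-u_2(t)\rangle\, dt = \tfrac{1}{2}\|u_1(s)-u_2(s)\|_H^2,
\end{equation*}
where the initial term vanishes because $u_1(0)=u_2(0)$.

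The second step is to bound the remaining two terms from below. By $H(A)_1$ the operator term satisfies $\langle Au_1(t)-Au_2(t),u_1(t)-u_2(t)\rangle \geq m_1\|u_1(t)-u_2(t)\|^2 - m_2\|u_1(t)-u_2(t)\|_H^2$, and by $H(J)_1$ the subdifferential term obeys $\langle\zeta_1(t)-\zeta_2(t),\iota u_1(t)-\iota u_2(t)\rangle_{U^*\times U}\geq -m_3\|\iota u_1(t)-\iota u_2(t)\|_U^2$. Combining these we obtain
\begin{equation*}
\tfrac{1}{2}\|u_1(s)-u_2(s)\|_H^2 + m_1 \int_0^s\|u_1-u_2\|^2\, dt \leq m_2\int_0^s\|u_1-u_2\|_H^2\, dt + m_3\int_0^s\|\iota u_1-\iota u_2\|_U^2\, dt.
\end{equation*}

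The third step is to handle the last term using $H_{const}$. If the inequality $m_1\geq m_3\|\iota\|_{\C{L}(V;U)}^2$ holds, then $m_3\|\iota u_1(t)-\iota u_2(t)\|_U^2 \leq m_1\|u_1(t)-u_2(t)\|^2$, so this term can be absorbed into the $V$-norm term on the left, leaving $\tfrac{1}{2}\|u_1(s)-u_2(s)\|_H^2 \leq m_2\int_0^s\|u_1-u_2\|_H^2\,dt$. If instead $H_{aux}$ A) holds, then $\iota = p\circ i$ with $p:H\to U$ continuous, hence $\|\iota u_1(t)-\iota u_2(t)\|_U \leq \|p\|_{\C{L}(H;U)}\|u_1(t)-u_2(t)\|_H$, and the last integral is bounded by $m_3\|p\|_{\C{L}(H;U)}^2\int_0^s\|u_1-u_2\|_H^2\,dt$, so (using $m_1\geq 0$) we again arrive at an estimate of the form $\tfrac{1}{2}\|u_1(s)-u_2(s)\|_H^2 \leq C\int_0^s\|u_1-u_2\|_H^2\, dt$ with $C=m_2+m_3\|p\|_{\C{L}(H;U)}^2$.

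The fourth and final step is to apply the scalar Gronwall inequality to the nonnegative function $s\mapsto\|u_1(s)-u_2(s)\|_H^2$, which yields $u_1\equiv u_2$ in $\C{H}$ and hence in $\C{V}$. The only nontrivial point is producing the measurable selections $\zeta_i(t)\in\partial J(\iota u_i(t))$ from the given $\eta_i$; this follows from the definition of the inclusion just below (\ref{eq:inclusionmain}) and standard measurable-selection arguments (as already used implicitly in Theorem \ref{thm:existence}). The case split in $H_{const}$ is the only real subtlety, and both branches reduce uniqueness to a one-dimensional Gronwall argument.
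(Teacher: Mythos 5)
Your proposal is correct and follows essentially the same route as the paper: test the difference of the two inclusions with $u_1(t)-u_2(t)$, apply the monotonicity-type conditions $H(A)_1$ and $H(J)_1$, split according to the two branches of $H_{const}$ (absorbing the $m_3$ term either via $m_1\geq m_3\|\iota\|_{\C{L}(V;U)}^2$ or via the factorization $\iota=p\circ i$ from $H_{aux}$ A)), and conclude with Gronwall. The only differences are cosmetic — you work with the integrated form over $(0,s)$ instead of the differential inequality, and your constant $m_2+m_3\|p\|_{\C{L}(H;U)}^2$ in case A) is in fact the precise one.
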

\begin{proof}
Assume that $u_1, u_2$ are two distinct solutions to the problem (\ref{eq:inclusionmain}). We have,
for $v\in V$ and a.e. $t\in [0,T]$
\begin{equation}
\langle (u_1-u_2)'(t), v\rangle + \langle Au_1(t) - Au_2(t), v \rangle +
\langle\xi(t)-\eta(t),\iota v \rangle_{U\times U^*} = 0,
\end{equation}
where $\xi(t)\in \partial J(\iota u_1(t))$ and $\eta(t)\in \partial J(\iota u_2(t))$ for a.e. $t\in
(0,T)$. Taking $v = u_1(t)-u_2(t)$, we obtain
\begin{eqnarray}
&&\frac{1}{2}\frac{d}{dt}\|u_1(t)-u_2(t)\|^2_H + \langle Au_1(t) - Au_2(t), u_1(t)- u_2(t)\rangle +\\
&&+\langle\xi(t)-\eta(t),\iota u_1(t)-\iota u_2(t)) \rangle_{U\times U^*} = 0.\nonumber
\end{eqnarray}
Application of $H(A)_1$ and $H(J)_1$ gives for a.e. $t\in (0,T)$
\begin{eqnarray}
&&\frac{1}{2}\frac{d}{dt}\|u_1(t)-u_2(t)\|^2_H + m_1 \|u_1(t) - u_2(t)\|^2 -\\
&&-m_2\|u_1(t)-u_2(t)\|_H^2-m_3 \|\iota(u_1(t)-u_2(t))\|_U^2\leq  0.\nonumber
\end{eqnarray}
By $H_{const}$ we have either
\begin{equation}
\frac{1}{2}\frac{d}{dt}\|u_1(t)-u_2(t)\|^2_H \leq  m_2\|u_1(t)-u_2(t)\|_H^2,
\end{equation}
or, in the case of $H_{aux}$ A),
\begin{equation}
\frac{1}{2}\frac{d}{dt}\|u_1(t)-u_2(t)\|^2_H \leq  (m_2+\|p\|_{\C{L}(H;U)})\|u_1(t)-u_2(t)\|_H^2,
\end{equation}
which, by the Gronwall lemma, gives the thesis.
\end{proof}
\noindent \textbf{Remark 9.} Under assumptions of Theorem \ref{thm:unique}, the convergences in
Theorem \ref{thm:existence} hold for the whole sequences $u_\tau$ and $\bar{u}_\tau$.
\begin{thm}\label{thm:converge}
Let assumptions $H(A)_1$, $H(J)$, $H_0$, $H(U)$, $H_{aux}$ hold and the subsequences $u_\tau,
\bar{u}_\tau$ converge in the sense (\ref{eq:convuvstar})-(\ref{eq:convuprimevstar}). Then
$u_\tau\to u$ strongly in $C([0,T];H)$. If instead of $H(A)_1$ we assume $H(A)_2$, then $\bar{u}_\tau\to u$
strongly in $\C{V}$.
\end{thm}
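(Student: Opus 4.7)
The theorem has two essentially independent parts; I would dispatch the $\C{V}$-case first (it is almost immediate) and then focus the main effort on the $C([0,T];H)$ statement under $H(A)_1$.

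\textbf{Strong $\C{V}$-convergence under $H(A)_2$.} Lemma \ref{lem:boundsnemytskii} provides $\bar{u}_\tau$ bounded in $M^{2,2}(0,T;V,V^*)$, and $\bar{u}_\tau \to u$ weakly in $\C{V}$ by (\ref{eq:convuvstar}). In the proof of Theorem \ref{thm:existence} the inequality $\limsup_{\tau \to 0}\langle \C{A}\bar{u}_\tau, \bar{u}_\tau - u\rangle_{\C{V}^* \times \C{V}} \le 0$ was already established. Hypothesis $H(A)_2$ applies verbatim and yields $\bar{u}_\tau \to u$ strongly in $\C{V}$.

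\textbf{Strong $C([0,T];H)$-convergence under $H(A)_1$.} The plan is to subtract the Rothe identity (\ref{eq:nemytskii}) from the limit equation, test pointwise with $v(t) = \bar{u}_\tau(t) - u(t) \in V$, integrate over $[0, t^*]$, and produce a Gronwall-type estimate for $\|u_\tau(t^*) - u(t^*)\|_H^2$. The parabolic term is reorganized through the identity
\[
\langle (u_\tau - u)', \bar{u}_\tau - u\rangle = \langle (u_\tau - u)', u_\tau - u\rangle + \langle u'_\tau, \bar{u}_\tau - u_\tau\rangle - \langle u', \bar{u}_\tau - u_\tau\rangle.
\]
After integration in time the first summand gives $\tfrac12(\|u_\tau(t^*) - u(t^*)\|_H^2 - \|u_{0\tau} - u_0\|_H^2)$, while a direct computation on each subinterval $((k-1)\tau, k\tau]$ shows the second summand equals $\tau^{-1}(k - t/\tau)\|u^k_\tau - u^{k-1}_\tau\|_H^2 \ge 0$, so it can be discarded. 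The monotonicity relation in $H(A)_1$ supplies $\langle A\bar{u}_\tau - Au, \bar{u}_\tau - u\rangle \ge -m_2 \|\bar{u}_\tau - u\|_H^2$, where dropping the nonnegative $m_1$-term is essential since $H(A)_1$ allows $m_1 = 0$. The multivalued contribution $\int_0^{t^*} \langle \xi_\tau - \xi, \iota(\bar{u}_\tau - u)\rangle\, dt$ is uniformly $o(1)$ in $t^*$, because $\{\xi_\tau\}$ is bounded in $\C{U}^*$ while $\bar{\iota}\bar{u}_\tau \to \bar{\iota}u$ strongly in $\C{U}$ by $H(U)$, and the analogous statement holds for $\bar{f}_\tau - f$. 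Together with $\|\bar{u}_\tau - u_\tau\|_{\C{H}}^2 = O(\tau)$ (from Lemma \ref{lem:bounds}), which allows me to replace $\|\bar{u}_\tau - u\|_H^2$ by $\|u_\tau - u\|_H^2$ modulo a vanishing error, this leads to
\[
\tfrac12 \|u_\tau(t^*) - u(t^*)\|_H^2 \le \varepsilon_\tau(t^*) + \phi_\tau(t^*) + 4m_2 \int_0^{t^*} \|u_\tau - u\|_H^2\, dt,
\]
with $\varepsilon_\tau(t^*) \to 0$ uniformly on $[0,T]$ and $\phi_\tau(t^*) := \int_0^{t^*} \langle u', \bar{u}_\tau - u_\tau\rangle\, dt$.

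The main obstacle is that $\bar{u}_\tau - u_\tau$ converges to $0$ only weakly in $\C{V}$ (and strongly in $\C{V}^*$ with norm of order $\tau$), so the pairing with $u' \in \C{V}^*$ cannot be bounded by norm estimates alone. I would handle $\phi_\tau$ by a pointwise-plus-equicontinuity argument: for every fixed $t^*$, the functional $w \mapsto \int_0^{t^*} \langle u', w\rangle\, dt$ is continuous on $\C{V}$, and both $u_\tau$ and $\bar{u}_\tau$ converge weakly to $u$ in $\C{V}$, so $\phi_\tau(t^*) \to 0$ pointwise. Meanwhile the estimate $|\phi_\tau(t_2) - \phi_\tau(t_1)| \le \|u'\|_{L^2(t_1, t_2; V^*)} \|\bar{u}_\tau - u_\tau\|_{\C{V}}$ yields a modulus of continuity in $t_2 - t_1$ independent of $\tau$ (since $u' \in \C{V}^*$ is fixed while $\|\bar{u}_\tau - u_\tau\|_{\C{V}}$ is bounded by Lemma \ref{lem:boundsnemytskii}). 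Pointwise convergence plus equicontinuity give $\sup_{t^*}|\phi_\tau(t^*)| \to 0$, and Gronwall's lemma applied to the displayed estimate produces $\sup_{t^* \in [0,T]} \|u_\tau(t^*) - u(t^*)\|_H \to 0$, as required.
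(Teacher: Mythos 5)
Your proposal is correct and follows essentially the same route as the paper: you subtract the limit equation from the Rothe identity, test with $\bar{u}_\tau(t)-u(t)$, split $\langle(u_\tau-u)',\bar{u}_\tau-u\rangle$ exactly as the paper does (discarding the nonnegative term $\langle u_\tau',\bar{u}_\tau-u_\tau\rangle$), estimate the $J$- and $f$-terms via the strong convergences $\bar{\iota}\bar{u}_\tau\to\bar{\iota}u$ in $\C{U}$ and $\bar{f}_\tau\to f$ in $\C{V}^*$, close with Gronwall for the $C([0,T];H)$ part, and obtain the $\C{V}$ part by feeding the limsup inequality into $H(A)_2$. The only deviations are refinements rather than a different method: you invoke $\limsup_{\tau\to 0}\langle\C{A}\bar{u}_\tau,\bar{u}_\tau-u\rangle_{\C{V}^*\times\C{V}}\leq 0$ directly from the proof of Theorem \ref{thm:existence} instead of re-integrating (\ref{eqn:integration}) over $(0,T)$, and your pointwise-convergence-plus-equicontinuity argument for $\phi_\tau(t^*)=\int_0^{t^*}\langle u'(s),\bar{u}_\tau(s)-u_\tau(s)\rangle\,ds$ is actually more careful than the paper's treatment, which simply inserts the full-interval pairing $\langle u',\bar{u}_\tau-u_\tau\rangle_{\C{V}^*\times\C{V}}$ where a bound uniform in $t^*$ is what the Gronwall step really requires.
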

\begin{proof}
Let $u_\tau$ and $\bar{u}_\tau$ be interpolants built on the solutions of the Rothe problem and let
$u$ be the solution to (\ref{eq:inclusionmain}) obtained in Theorem {\ref{thm:existence}}. For
$v\in V$ and a.e. $t\in (0,T)$ we get
\begin{equation}
\langle u_\tau '(t) - u'(t),v\rangle + \langle A\bar{u}_\tau (t) -
Au(t),v\rangle+\langle\xi_\tau(t)-\eta(t),\iota v\rangle_{U^*\times U}=\langle
\bar{f}_\tau(t)-f(t),v\rangle,
\end{equation}
where $\xi_\tau(t) \in \partial J(\iota \bar{u}_\tau(t))$ and $\eta(t) \in \partial J(\iota u(t))$
for a.e. $t\in (0,T)$. Choosing $v=\bar{u}_\tau(t)-u(t)$, we get
\begin{eqnarray}
&&\langle u_\tau '(t) - u'(t),\bar{u}_\tau (t) -u_\tau (t)\rangle + \frac{1}{2}\frac{d}{dt}\|u_\tau (t)- u(t)\|_H^2 +\nonumber\\
&& +\langle A\bar{u}_\tau (t) - Au(t),\bar{u}_\tau (t) - u(t)\rangle\leq \nonumber\\
&&\leq\langle \bar{f}_\tau(t)-f(t),\bar{u}_\tau(t)-u(t)\rangle + \|\iota \bar{u}_\tau(t)-\iota
u(t)\|_U\|\xi_\tau(t)-\eta(t)\|_{U^*}.\nonumber
\end{eqnarray}
Since $\langle u_\tau '(t), \bar{u}_\tau (t) -u_\tau (t)\rangle = \|\frac{d}{dt}u_\tau
'(t)\|_H^2(k\tau-t) \geq 0$ for any $t\in ((k-1)\tau, k\tau)$ for a.e. $t\in (0,T)$ we have
\begin{eqnarray}\label{eqn:integration}
&&\frac{1}{2}\frac{d}{dt}\|u_\tau (t)- u(t)\|_H^2 + \langle A\bar{u}_\tau (t) - Au(t),\bar{u}_\tau (t) - u(t)\rangle \leq \nonumber\\
&&\leq \|\iota \bar{u}_\tau(t)-\iota u(t)\|_U\|\xi_\tau(t)-\eta(t)\|_{U^*}+\nonumber\\
&&+ \langle \bar{f}_\tau(t)-f(t),\bar{u}_\tau(t)-u(t)\rangle+\langle u'(t),\bar{u}_\tau (t) -u_\tau
(t)\rangle.
\end{eqnarray}
Using $H(A)_1$ and integrating the last inequality, for $t\in[0,T]$, we get
\begin{eqnarray}
&&\frac{1}{2}\|u_\tau (t)- u(t)\|_H^2 \leq m_2\int_0^t\|u_\tau (s)- u(s)\|_{H}^2 \ dt + \nonumber\\
&&+\sqrt{2}c\|\bar{\iota} (\bar{u}_\tau-u)\|_{\C{U}}(2T+\|\bar{\iota}\bar{u}_\tau\|_{\C{U}}+\|\bar{\iota} u\|_{\C{U}})+\nonumber\\
&&+ \|\bar{f}_\tau-f\|_{\C{V}^*}(\|\bar{u}_\tau\|_{\C{V}}+\|u\|_{\C{V}})+\langle u',\bar{u}_\tau
-u_\tau\rangle_{\C{V}^*\times \C{V}}+\frac{1}{2}\|u_{0 \tau}- u_0\|_H^2.\nonumber
\end{eqnarray}
The Gronwall lemma gives the strong convergence of $u_\tau$ to $u$ in $C([0,T];H)$.

\noindent In order to obtain the strong convergence in $\C{V}$, let us integrate
(\ref{eqn:integration}) over $(0,T)$. We have
\begin{eqnarray}
&&\frac{1}{2}\|u_\tau (T)- u(T)\|_H^2 + \langle\C{A}\bar{u}_\tau-\C{A}u,\bar{u}_\tau-u\rangle_{\C{V}^*\times\C{V}}\leq \nonumber\\
&&\leq \sqrt{2}c\|\bar{\iota} (\bar{u}_\tau-u)\|_{\C{U}}(2T+\|\bar{\iota}\bar{u}_\tau\|_{\C{U}}+\|\bar{\iota} u\|_{\C{U}})+\nonumber\\
&&+ \|\bar{f}_\tau-f\|_{\C{V}^*}(\|\bar{u}_\tau\|_{\C{V}}+\|u\|_{\C{V}})+\langle u',\bar{u}_\tau
-u_\tau\rangle_{\C{V}^*\times \C{V}}+\frac{1}{2}\|u_{0 \tau}- u_0\|_H^2.\nonumber
\end{eqnarray}
Passing to the limit, we get
$$
\limsup_{\tau\to 0}\langle \C{A}\bar{u}_\tau-\C{A}u,\bar{u}_\tau-u\rangle_{\C{V}^*\times\C{V}} \leq
0.
$$
The thesis is implied by $H(A)_2$.
\end{proof}
\noindent \textbf{Remark 10.} If, in addition to assumptions of the Theorem \ref{thm:converge},
also $H(J)_1$ and $H_{const}$ hold, then, by Theorem \ref{thm:unique}, the whole sequences $u_\tau$
and $\bar{u}_\tau$ converge strongly in $C([0,T];H)$ and $\C{V}$ respectively.
\section{Examples}\label{sec:settings}
\noindent In this section we provide examples of that problem setup which are particular case of
the general problem considered previously. Moreover, we present a simple numerical example.

\noindent\textit{Problem settings} We assume that $\Omega\subset \mathbb{R}^n$  is an open and
bounded domain with smooth boundary. The space $V$ is either $H^1(\Omega;\mathbb{R}^m)$ with
$m\in\mathbb{N}$ (possibly, but not necessarily, $m=n$) or its closed subspace (which originates
from homogeneous Dirichlet boundary condition on $\Gamma_D\subset \partial \Omega$). Furthermore
let $H=L^2(\Omega;\mathbb{R}^m)$. Then the embedding $i:V\to H$ is continuous and compact. We
consider two examples.
\begin{itemize}
\item Multivalued term is defined on $\Omega$. We specify $\Lambda \subset \Omega$ to be an open subset on nonzero measure and fix $d\in \mathbb{N}$.
Furthermore we assume that $\C{M}\in L^\infty(\Lambda;\C{L}(\mathbb{R}^m;\mathbb{R}^d))$. Now
$U=L^2(\Lambda;\mathbb{R}^d)$. The mapping $\iota$ is defined by $(\iota
v)(x)=\C{M}(x)((iv)|_{\Lambda}(x))$. We observe that $\iota:V\to U$ is linear, continuous and
compact. By Proposition \ref{prop:compactembedding}, the embedding $M^{2,2}(0,T;V,V^*)\subset
L^2(0,T;H)$ is compact, which implies $H(U)$. Defining $p:H\to U$ by
$(pv)(x)=\C{M}(x)(v|_{\Lambda}(x))$, we see that $A)$ of $H_{aux}$ is satisfied. The solution
exists under assumptions $H(A), H(J)$ and $H_0$ (Theorem \ref{thm:existence}). Additional
assumptions $H(A)_1$ and $H(J)_1$ imply uniqueness of solution by Theorem \ref{thm:unique} and
strong convergence of the Rothe sequence $u_\tau$ in $C([0,T];H)$. Furthermore, if $H(A)_2$ holds,
then the sequence $\bar{u}_\tau$ converges strongly in $\C{V}$.

\noindent As the special case we can consider $\Lambda=\Omega$, $m=n=d$ and $\C{M}(x)\equiv I$
(identity) for all $x\in \Omega$. Then we recover $U=H$, which gives existence results in spirit of
\cite{Migorski2001}.

\item Multivalued term is defined on the boundary of $\Omega$. We specify $\Gamma_C \subset \partial \Omega$
disjoint with $\Gamma_D$. We take $Z=H^{\delta}(\Omega;\mathbb{R}^m)$ with
$\delta\in[\frac{1}{2},1)$. The continuous and compact embedding $V\to Z$ is denoted by $\bar{i}$
and the trace operator is given by $\bar{\gamma}:Z\to L^2(\Gamma_C;\mathbb{R}^m)$. Furthermore let
$d\in \mathbb{N}$ and $\C{M}\in L^\infty(\Gamma_C;\C{L}(\mathbb{R}^m;\mathbb{R}^d))$. Now
$U=L^2(\Gamma_C;\mathbb{R}^d)$. The mapping $\iota$ is defined by $(\iota
v)(x)=\C{M}(x)((\bar{\gamma}\bar{i}v)(x))$. The mapping $\iota:V\to U$ is linear, continuous and
compact. The spaces $V\subset Z\subset V^*$ satisfy the assumptions of Proposition
\ref{prop:compactembedding}, so $M^{2,2}(0,T;V,V^*)$ is embedded in $L^2(0,T;Z)$ compactly. Therefore
the assumption $H(U)$ is satisfied. Since claim $A)$ of $H_{aux}$ does not hold in this case, in
order to obtain the existence of solutions (Theorem \ref{thm:existence}) we need to assume $H(A),
H(J), H_0$ and either $B)$ or  $C)$ of $H_{aux}$. Furthermore, if $H(A)_1$ and $H(J)_1$ hold, then
a subsequence of the Rothe sequence $u_\tau$ converges strongly in $C([0,T];H)$ (Theorem
\ref{thm:converge}). If moreover $H(A)_2$ holds, then we also have the strong convergence of
$\bar{u}_\tau$ in $\C{V}$. If furthermore the relation between $m_1$ and $m_3$ given by $H_{const}$
holds, then the solution is unique (Theorem \ref{thm:unique}) and the whole Rothe sequences
$u_\tau$ and $\bar{u}_\tau$ converge strongly in $C([0,T];H)$ and $\C{V}$ respectively.

\noindent In the case $m=d=1$ and $\C{M}(x)\equiv I$ we recover the results of \cite{Migorski2004}.
If $m=n>1$ and $\nu$ is the unit outer normal versor on the boundary $\partial \Omega$, then two
special cases are $d=1$, $\C{M}(x)(a)=\nu(x)\cdot a$ and $d=m$, $\C{M}(x)(a)=a-(\nu(x)\cdot
a)\nu(x)$. We recover the cases of the boundary conditions given in normal and tangent directions,
respectively.
\end{itemize}

\noindent \textit{Numerical example}. Let us take $\Omega=(0,1)$. The problem under consideration
will be
\begin{eqnarray}
&&u_t(x,t)=u_{xx}(x,t)\ \ \mbox{for}\ \ (x,t)\in \Omega\times(0,T),\\
&&u(0,t)=0\ \ \mbox{for}\ \ t\in(0,T),\\
&&u_x(1,t)\in -\partial j(u(1,t))\ \ \mbox{for}\ \ t\in(0,T),\label{eq:boundex}\\
&&u(x,0)=u_0(x)\ \ \mbox{for}\ \ x\in(0,1).
\end{eqnarray}
We set $V=\{v\in H^1(0,1): v(0)=0\}$ and $H=L^2(0,1)$. Taking $t_k=k\Delta t$, $u(x,t_k):=u^k(x)$
and $v\in V$, the Rothe problem has the following form
\begin{eqnarray}
&&\int_0^1\frac{u^{k+1}(x)-u^k(x)}{\Delta t}v(x)\ dx+\int_0^1u^{k+1}_x(x)v_x(x)\ dx +
\partial j(u^{k+1}(1))v(1)\ni 0,\nonumber\\
&&u^0(x)=u_0(x).
\end{eqnarray}
The problem in each time step will be solved by the Galerkin scheme. Let $V_n$ be a subspace of $V$
consisting of piecewise linear functions constructed on a uniform mesh $x_0=0, \ldots,x_i=i\Delta
x,\ldots, x_n=n\Delta x=1$ in $(0,1)$ such that $\mbox{dim} V_n = n$. Let furthermore $u^k(x)$ be
approximated by $\sum_{i=1}^{n}\alpha_i^k v_i$, where $\{v_i\}_{i=1}^{n}$ forms the base of $V_n$ given by the duality condition $v_j(x_i)=\delta_{ij}$.
We assume that $\alpha_n^k$ is the value of the solution at the last mesh point ($x=1$). The values
$\alpha_i^{k+1}$ satisfy, for $j=1,\ldots, n$
\begin{eqnarray}
&&\sum_{i=1}^n\alpha_i^{k+1}\left(\frac{1}{\Delta t}\int_0^1v_i(x)v_j(x)\ dx+\int_0^1 v_i'(x)v_j'(x)\
dx\right)-\nonumber\\
&&-\sum_{i=1}^n\alpha_i^k\frac{1}{\Delta t}\int_0^1v_i(x)v_j(x)\ dx + \partial
j(\alpha_n^{k+1}) v_j(1)\ni 0.\nonumber
\end{eqnarray}
Calculating the integrals and denoting $d=\frac{\Delta t}{\Delta x^2}$, for $j=2,\ldots,n-1$, we
obtain
\begin{equation}\label{eqn:j_1_to_k_m_1}
\alpha_{j-1}^{k+1}\left(\frac{1}{6}-d\right)+\alpha_j^{k+1}\left(\frac{2}{3}+2d\right)+
\alpha_{j+1}^{k+1}\left(\frac{1}{6}-d\right)=\alpha_{j-1}^k\frac{1}{6}+\alpha_j^k\frac{2}{3}+
\alpha_{j+1}^k\frac{1}{6},
\end{equation}
and for $j=n$, we have
\begin{equation}\label{eqn:j_k}
\alpha_{n-1}^{k+1}\left(\frac{1}{6}-d\right)+\alpha_n^{k+1}\left(\frac{1}{3}+d\right)+ \frac{\Delta
t}{\Delta x}\partial j(\alpha_n^{k+1})\ni\alpha_{n-1}^k\frac{1}{6}+\alpha_n^k\frac{1}{3}.
\end{equation}
Finally for the left, Dirichlet, boundary point we have 
\begin{equation}\label{eqn:j_1}
\alpha_1^{k+1}\left(\frac{2}{3}+2d\right)+
\alpha_{2}^{k+1}\left(\frac{1}{6}-d\right)=\alpha_1^k\frac{2}{3}+
\alpha_{2}^k\frac{1}{6}.
\end{equation}
 \begin{figure}[h]
        \centering
        \includegraphics[width=\textwidth]{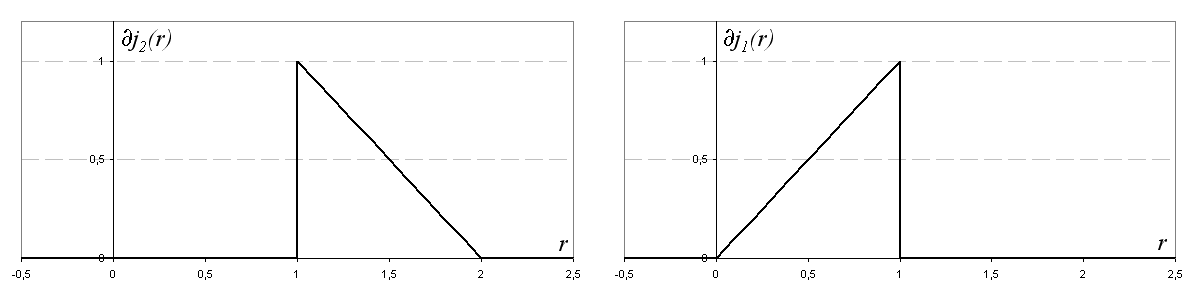}
        \caption{\footnotesize{Examples of multifunctions used as nonmonotone and multivalued boundary conditions (\ref{eq:boundex})}}
        \label{fig:examples}
 \end{figure}
We consider two examples of the locally Lipschitz functionals $j$:
$$
j_1(r)=\begin{cases} 0\  \mbox{for}\ r\leq 0 \\ \frac{r^2}{2}\  \mbox{for}\ r\in(0,1)\\
\frac{1}{2}\  \mbox{for}\ r\geq 1\end{cases}\ \ j_2(r)=\begin{cases} 0\  \mbox{for}\ r\leq 1 \\
\frac{1-(r-2)^2}{2}\  \mbox{for}\ r\in(1,2)\\ \frac{1}{2}\  \mbox{for}\ r\geq 2.\end{cases}
$$
Their subdifferentials in the sense of Clarke are given by
\begin{equation}\label{eqn:subdiff_egzamples}
\partial j_1(r)=\begin{cases} 0\  \mbox{for}\ r\leq 0\ \mbox{or}\ r>1 \\ r\  \mbox{for}\ r\in(0,1)\\ [0,1]\  \mbox{for}\ r=1\end{cases}\ \ \partial j_2(r)=\begin{cases} 0\  \mbox{for}\ r<1\ \mbox{or}\ r\geq 2 \\ 2-r\  \mbox{for}\ r\in(1,2)\\ [0,1]\  \mbox{for}\ r=1.\end{cases}
\end{equation}
 \begin{figure}[h]
        \centering
         \includegraphics[width=\textwidth]{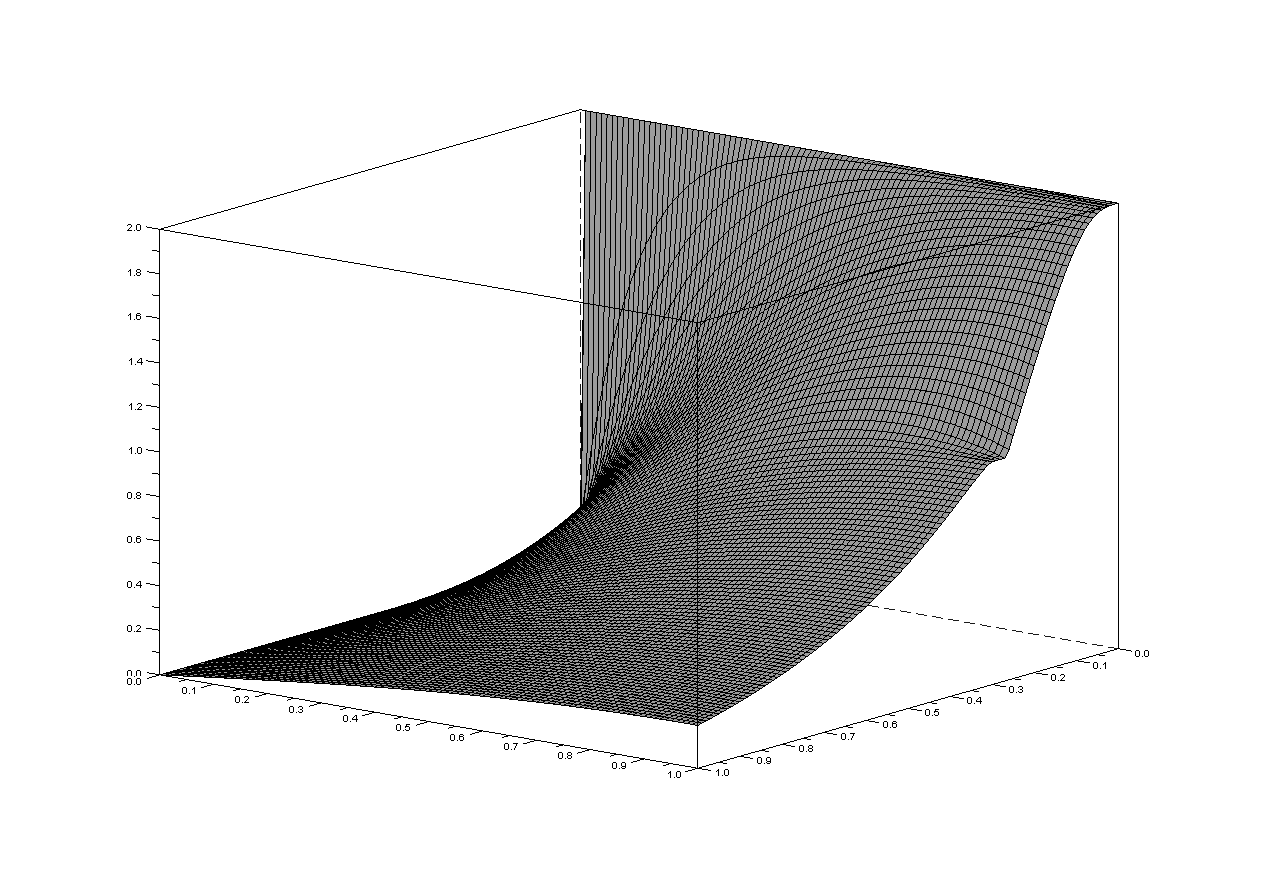}
        \caption{\footnotesize{Simulation for the potential $j_2$. Plot of unique obtained solution is drawn.}}
        \label{fig:j2}
 \end{figure}
 \begin{figure}[h]
        \centering
        \includegraphics[width=\textwidth]{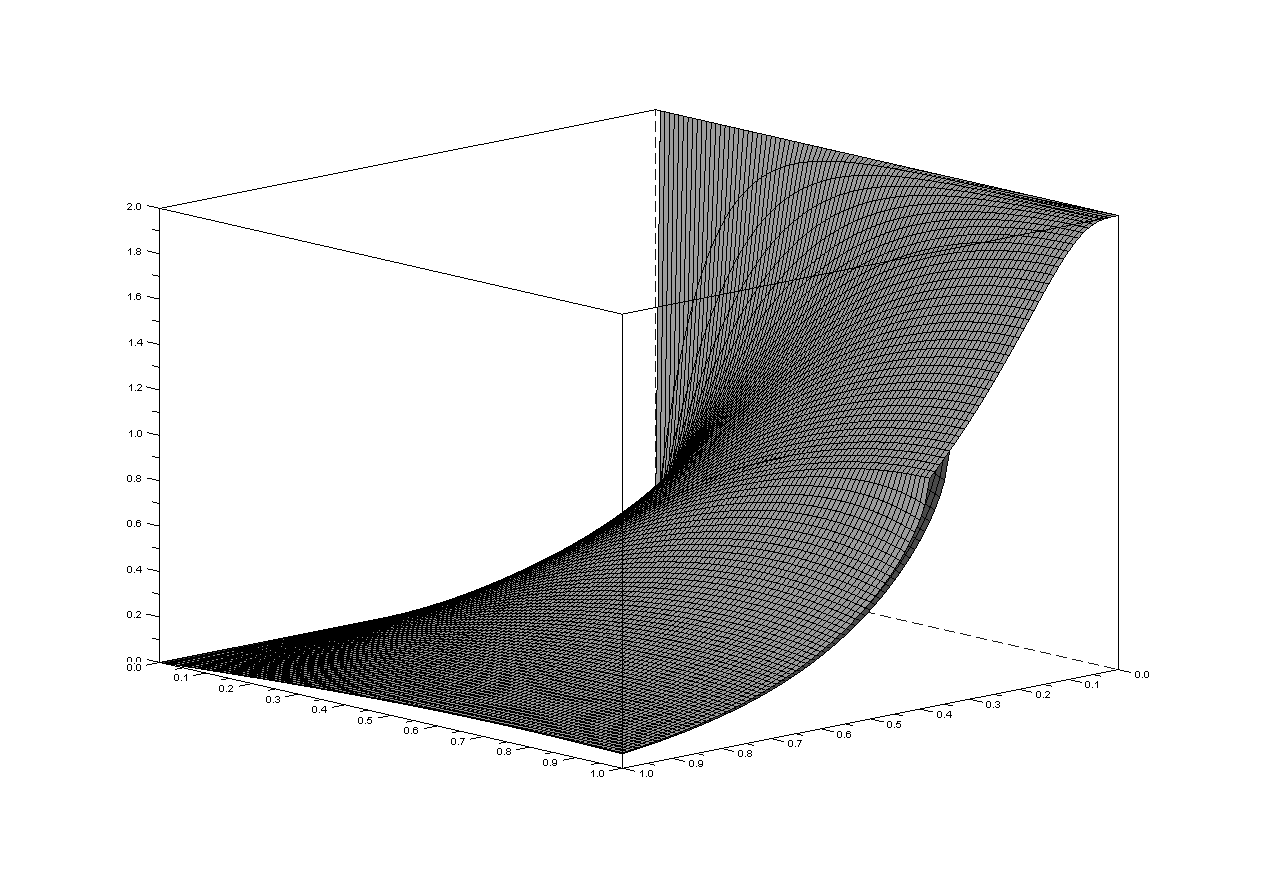}
        \caption{\footnotesize{Simulation for the potential $j_1$. Plots of two (respectively maximal and minimal one) of obtained many solutions are drawn.}}
        \label{fig:j1}
 \end{figure}
\noindent The graphs of $\partial j_1$ and $\partial j_2$ are presented in Figure
\ref{fig:examples}. Both potentials satisfy $H(J)$. The potential $j_1$ does not satisfy $H(J)_1$
since its subdifferential has a nonmonotone jump. The potential $j_2$ satisfies $H(J)_1$ since its
subdifferential has a monotone jump and nonmonotonicity is Lipschitz. In the case of $j_1$ the
question of multiplicity of solutions remains an open problem (however the numerical simulation
below show that it is more likely that there are multiple solutions) and in the case of $j_2$, a
single solution is expected (at least as long as the inequality in $H_{const}$ holds).

\noindent In both cases we take $u_0(x)\equiv 2$. The following scheme is used to find solutions of
(\ref{eqn:j_1_to_k_m_1})-(\ref{eqn:j_1}). In every time step at most three solutions can be found:
\begin{itemize}
\item Assume that the element of $\partial j(\alpha_{n}^{k+1})$ for which (\ref{eqn:j_k}) holds is equal to $0$ i.e.
we fall on the horizontal line in the graph of $\partial j$. Solve the system of $n$ equations
(\ref{eqn:j_1_to_k_m_1})-(\ref{eqn:j_1}) and verify whether obtained $\alpha_n^{k+1}$ falls in the
corresponding interval.
\item Assume that the element of $\partial j(\alpha_{n}^{k+1})$ for which (\ref{eqn:j_k}) holds is on the oblique
line in the graph of $\partial j$. Solve the system of $n$ equations
(\ref{eqn:j_1_to_k_m_1})-(\ref{eqn:j_1}) and verify whether obtained $\alpha_n^{k+1}$ is in the
corresponding interval.
\item Assume that we fall on the vertical line in the graph of $\partial j$, i.e. $\alpha_n^{k+1}=1$. Solve the system of $n-1$ equations (\ref{eqn:j_1_to_k_m_1}) and (\ref{eqn:j_1}). Then verify if $\partial j(\alpha_n^{k+1})$ calculated from (\ref{eqn:j_k}) falls in the interval $[0,1]$.
\end{itemize}
The simulations were run for $\Delta t = 0.01$ and $\Delta x = 0.01$. For the case of $j_2$ only
one numerical solution was obtained (i.e. in every time step only one of above three cases
occurred). The result is presented in Figure \ref{fig:j2}. For the case of $j_1$ many solutions
were obtained (i.e. there were time steps in which more then one of above three cases occurred).
Figure \ref{fig:j1} shows two solutions with respectively maximum and minimum value of
$\alpha_n^{k}$ chosen in each time step in which multiple solutions were found.

\end{document}